\documentclass[10pt]{amsart}

\usepackage{amsmath, amsthm, amscd, amsfonts, amssymb, graphicx, color}
\usepackage{latexsym}
\usepackage{amscd}
\usepackage{amsthm}
\usepackage{amssymb}
\usepackage{enumerate}
\usepackage{mathabx}

\theoremstyle{plain}
\newtheorem{theorem}{Theorem}
\newtheorem{corollary}[theorem]{Corollary}
\newtheorem{lemma}[theorem]{Lemma}

\theoremstyle{remark}

\newcommand{\C}{\mathbb{C}}
\newcommand{\R}{\mathbb{R}}
\newcommand{\N}{\mathbb{N}}
\newcommand{\Z}{\mathbb{Z}}

\renewcommand*{\span}{\ensuremath{\mathrm{span\,}}}

\begin{document}

\title{Montel--type theorems for exponential polynomials}
\author{J.~M.~Almira$^*$, L.~Sz\'ekelyhidi}

\subjclass[2010]{Primary 43B45, 39A70; Secondary 39B52.}

\keywords{Exponential Polynomials on abelian Groups, Montel's theorem.}
\thanks{$^*$ Corresponding author }
\let\thefootnote\relax\footnotetext{The research was supported by the Hungarian National Foundation for Scientific Research (OTKA),   Grant No. NK-81402.}

\address{Departamento de Matem\'{a}ticas, Universidad de Ja\'{e}n, E.P.S. Linares,  C/Alfonso X el Sabio, 28, 23700 Linares, Spain}
\email{jmalmira@ujaen.es}
\address{Institute of Mathematics, University of Debrecen, Egyetem t\'er 1, 4032 Debrecen, Hungary --- Department of Mathematics, University of Botswana, 4775 Notwane Rd. Gaborone, Botswana }
\email{lszekelyhidi@gmail.com}

\maketitle
\begin{abstract}
In this paper we characterize local exponential monomials and polynomials on different types of Abelian groups and we prove Montel--type theorems for these function classes.  
\end{abstract}

\section{Notation and preliminaries}

In this paper $\N,\Z,\R,\C$ denote the set of natural numbers, integers, reals, and complex numbers, respectively. We note that $0$ is in $\N$. We use the following standard multi-index notation: for each natural number $d$ the elements of $\N^d$ are called {\it multi-indices}. Whenever $\alpha, \beta$ are multi-indices, and $x$ is in $\C^d$, then we write
\begin{eqnarray*}
|\alpha|&=&\alpha_1+\alpha_2+\dots+\alpha_d\\
\alpha^{\beta}&=&\alpha_1^{\beta_1} \alpha_2^{\beta_2}\cdots \alpha_d^{\beta_d}\\
x^{\alpha}&=&x_1^{\alpha_1}x_2^{\alpha_2}\cdots x_d^{\alpha_d}\,.
\end{eqnarray*}
We use the convention $0^0=1$. We note that the last equation extends to $\alpha$ in $\Z^d$ assuming $x$ is in $\C_0^d$, where $\mathbb{C}_0$ denotes the set of nonzero complex numbers.
\vskip.3cm

The function $f:\C^d\to \C$ is called an {\it exponential}, if it is a continuous homomorphism of the additive group of $\C^n$ into the multiplicative group of nonzero complex numbers. The function $f:\C^d\to \C$ is called an {\it exponential monomial}, if it is the product of an exponential and a polynomial. Linear combinations of exponential monomials are called {\it exponential polynomials}. Hence the general form of exponential polynomials on $\C$ is the following: for each $z$ in $\C^d$ we have
\begin{equation*}
f(z)=\sum_{|\alpha|\leq N} a_{\alpha} z^{\alpha} e^{\langle \lambda, z\rangle}\,,
\end{equation*}
where $N$ is a nonnegative integer, $a_{\alpha}$ is a complex number for each $|\alpha|\leq N$, further $\lambda$ is in $\C^d$, and $\langle \lambda, z\rangle$ is the inner product in $\C^d$. In particular, the restrictions of exponential polynomials to $\Z^d$ can be written in the form
\begin{equation*}
f(n)=\sum_{|\alpha|\leq N} a_{\alpha} n^{\alpha} \lambda^n
\end{equation*}
for each $n$ in $\Z^d$, where $\lambda$ is in $\C_0^d$.
\vskip.3cm

These concepts have natural extensions to any topological Abelian group in place of $\C^d$. On any topological Abelian group $G$ we use the term {\it exponential} for a continuous complex homomorphism of $G$ into the multiplicative topological group of nonzero complex numbers. However, the concept of "polynomial" can be generalized in several different ways. One depends on the concept of {\it additive function}, which is a continuous homomorphism of $G$ into the additive topological group of complex numbers. A {\it polynomial} on $G$ is a function having the form $x\mapsto P\bigl(a_1(x),a_2(x),\dots,a_k(x)\bigr)$, where $P:\C^k\to\C$ is a complex polynomial in $k$ variables, which we shall call in this paper an {\it ordinary polynomial}, and $a_1,a_2,\dots,a_k$ are additive functions. Finally, we call a function an {\it exponential monomial}, if it is the product of a polynomial and an exponential. In this case, if the polynomial is nonzero, then the exponential is unique, and we say that $f$ {\it corresponds to} the exponential in question. 
\vskip.3cm

 The other concept of polynomial is related to Fr\'echet's functional equation
\begin{equation}\label{Frech1}
\Delta_{y_1,y_2,\dots,y_{n+1}} f(x)=0\,,
\end{equation}
where $n$ is a natural number, $x,y_1,y_2,\dots,y_{n+1}$ are in $G$, and $\Delta_y$ stands for the {\it difference operator} defined by
\begin{equation*}
\Delta_y f(x)=f(x+y)-f(x)
\end{equation*}
for each $x,y$ in $G$ and function $f:G\to\C$, further $\Delta_{y_1,y_2,\dots,y_{n+1}}$ denotes the product
\begin{equation*}
\Delta_{y_1,y_2,\dots,y_{n+1}} = \Delta_{y_1}\circ \Delta_{y_2}\circ\dots\circ \Delta_{y_{n+1}}\,.
\end{equation*}
Sometimes the functional equation
\begin{equation}\label{Frech2}
\Delta_y^{n+1} f(x)=0\,,
\end{equation}
where $n$ is a natural number, $x,y$ are in $G$, is also called Fr\'echet's equation. Here $\Delta_y^{n+1}$ denotes the $n+1$-th iterate of $\Delta_y$. It turns out that \eqref{Frech1} and \eqref{Frech2} are equivalent for complex valued functions on any Abelian group, as it has been proved in \cite{MR0265798} (see also \cite{Sze14b}).  
\vskip.3cm
 
The function $f:G\to\C$ is called a {\it generalized polynomial}, if it satisfies \eqref{Frech1} for each $x,y_1,y_2,\dots,y_{n+1}$ in $G$, and it is called a {\it generalized exponential monomial}, if it is the product of a generalized polynomial and an exponential. Here the exponential is unique again, assuming that the generalized polynomial is nonzero. We use the same terminology as above, that is, linear combinations of exponential monomials, resp. generalized exponential monomials are called {\it exponential polynomials}, resp. {\it generalized exponential polynomials}. It is known that on finitely generated Abelian groups every generalized polynomial is a polynomial (see e.g. \cite{MR2167990, MR2968200}). It follows that on finitely generated Abelian groups every generalized exponential polynomial is an exponential polynomial. A function is called a {\it local polynomial},  a {\it local exponential monomial}, or a {\it local exponential polynomial}, if its restriction to every finitely generated subgroup is a polynomial, an exponential monomial, or an exponential polynomial, respectively.
\vskip.3cm

Given a topological Abelian group $G$ the set of all continuous complex valed functions on $G$ will be denoted by $\mathcal C(G)$. This space, equipped with the pointwise linear operations and with the topology of uniform convergence on compact sets is a locally convex topological vector space. If $G$ is discrete, then the corresponding topology is that of pointwise convergence. A subset of $\mathcal C(G)$ is called {\it translation invariant}, if with every element $f$ in this subset it also contains its {\it translate} $\tau_y f$ for each $y$ in $G$, where $\tau_y f(x)=f(x+y)$ for each $x,y$ in $G$. A closed translation invariant subspace in $\mathcal C(G)$ is called a {\it variety} on $G$. 
\vskip.3cm

A basic result on varieties on $G=\Z^d$ is the following (see \cite{MR0098951}).

\begin{theorem}\label{Lefr} (Lefranc, 1958) In each variety on $\Z^d$ the exponential monomials span a dense subspace.
\end{theorem}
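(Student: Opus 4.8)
The plan is to dualise the problem. Varieties on $\Z^d$ correspond to ideals of the group algebra of $\Z^d$, which here is a Laurent polynomial ring, and the density of the exponential monomials translates into an ideal-membership statement that can be tested one maximal ideal at a time; at each such ideal it reduces, after passing to the completion and describing the relevant exponential monomials through a Macaulay inverse system, to a bipolar argument.

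I would first fix the duality. Identify $\mathcal C(\Z^d)$ with $\C^{\Z^d}$ carrying the product topology; its topological dual is then $A=\C[\Z^d]$, the convolution algebra of finitely supported functions $\mu\colon\Z^d\to\C$, paired with $\mathcal C(\Z^d)$ by $\langle f,\mu\rangle=\sum_n f(n)\mu(n)$. Writing $n\mapsto z^n$ and $\widehat\mu(z)=\sum_n\mu(n)z^n$ identifies $A$ with $\C[z_1^{\pm 1},\dots,z_d^{\pm 1}]$, and the translation $\tau_y$ is dual to multiplication by $z^y$. Hence a closed subspace $V\subseteq\mathcal C(\Z^d)$ is translation invariant precisely when $I:=V^\perp$ is an ideal of $A$, and then $V=I^\perp$ by the bipolar theorem. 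As the exponential monomials contained in $V$ already lie in $V$, their closed linear span is contained in $V$; equality, again by the bipolar theorem, is equivalent to the assertion: \emph{if $\mu\in A$ annihilates every exponential monomial contained in $V$, then $\widehat\mu\in I$.} This is what I would prove.

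Next I would describe the exponential monomials of $V$ attached to a fixed exponential. For $\lambda\in(\C\setminus\{0\})^d$ and a complex polynomial $P$ in $d$ variables, write $\varphi_{\lambda,P}(n)=P(n)\lambda^n$; from $z_j\partial_{z_j}(z^n)=n_jz^n$ one obtains $\langle\varphi_{\lambda,P},\mu\rangle=(D_P\widehat\mu)(\lambda)$, where $D_P:=P(z_1\partial_{z_1},\dots,z_d\partial_{z_d})$, so that $\varphi_{\lambda,P}\in V$ exactly when $(D_Pg)(\lambda)=0$ for every $g\in I$. Now fix $\lambda$ and pass to the completion $\widehat A_\lambda\cong\C[[u_1,\dots,u_d]]$, $u=z-\lambda$, with its $\mathfrak m$-adic topology, which here is the product topology; its continuous dual $E'_\lambda$ is the space of constant-coefficient differential operators evaluated at $\lambda$. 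From $z_j\partial_{z_j}=\lambda_j\partial_{u_j}+u_j\partial_{u_j}$ one sees that the functionals $D_P(\cdot)(\lambda)$ and the Taylor-coefficient functionals $\partial_u^a(\cdot)(\lambda)$ span the same subspace, the change of basis between them being triangular in total degree with nonzero diagonal entries $\lambda^\beta\beta!$ (nonzero because every $\lambda_j\neq0$); hence every continuous functional on $\widehat A_\lambda$ has the form $D_P(\cdot)(\lambda)$. Consequently the exponential monomials of $V$ with exponential $\lambda$ produce precisely those functionals in $E'_\lambda$ that vanish on $I$, equivalently on the closure of $I$ in $\widehat A_\lambda$; and this closure equals $I\widehat A_\lambda$, because $I$ is an ideal and $A$ is dense in $\widehat A_\lambda$ (so every $hg$ with $g\in I$, $h\in\widehat A_\lambda$ is a limit of elements $h_kg\in I$), while $I\widehat A_\lambda$ is closed: by Krull's intersection theorem it equals $\bigcap_n(I\widehat A_\lambda+\mathfrak m^n)$, an intersection of open --- hence closed --- subgroups.

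Finally I would put the pieces together. For the closed subspace $I\widehat A_\lambda$ of the Fr\'echet space $\widehat A_\lambda$, the bipolar theorem shows that an element annihilated by all functionals in $E'_\lambda$ vanishing on $I\widehat A_\lambda$ must itself lie in $I\widehat A_\lambda$. Applying this to $\widehat\mu$, viewed in $\widehat A_\lambda$ through its Taylor expansion at $\lambda$, the hypothesis that $\mu$ annihilates all exponential monomials of $V$ with exponential $\lambda$ forces $\widehat\mu\in I\widehat A_\lambda$ --- and this for every $\lambda\in(\C\setminus\{0\})^d$. Since $A$ is Noetherian (a localization of $\C[z_1,\dots,z_d]$) with maximal ideals exactly the $\mathfrak m_\lambda$, and $A_{\mathfrak m_\lambda}\to\widehat A_\lambda$ is faithfully flat, it follows that $\widehat\mu\in IA_{\mathfrak m_\lambda}$ for every maximal ideal, hence $\widehat\mu\in I$ since ideal membership in a commutative ring is detected locally. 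I expect the middle step to be the main obstacle: identifying, with all the attendant analytic bookkeeping, the exponential monomials of $V$ at $\lambda$ with the Macaulay inverse system of $I$ at $\lambda$, where embedded components of the zero set of $I$ contribute genuinely. This is also what lifts the theorem beyond the one-variable case, in which $V$ is either finite dimensional --- the solution space of a linear recurrence, spanned by finitely many functions $n\mapsto n^j\lambda^n$ --- or, when $I=0$, all of $\mathcal C(\Z)$, the exponentials $n\mapsto\lambda^n$ then separating points because a Laurent polynomial vanishing on all of $\C\setminus\{0\}$ is zero.
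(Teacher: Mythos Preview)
The paper does not prove this statement: Theorem~\ref{Lefr} is merely quoted as a known result, with a reference to Lefranc's 1958 note in the \emph{Comptes Rendus}, and is used thereafter as a black box. So there is no ``paper's own proof'' to compare against.

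That said, your outline is a sound and essentially complete proof, following the commutative-algebra route that has become the standard modern treatment of Lefranc's theorem. The key ingredients are all in place: the duality identifying varieties on $\Z^d$ with ideals of the Noetherian ring $A=\C[z_1^{\pm1},\dots,z_d^{\pm1}]$; the computation $\langle \varphi_{\lambda,P},\mu\rangle=(D_P\widehat\mu)(\lambda)$ with $D_P=P(z_1\partial_{z_1},\dots,z_d\partial_{z_d})$; the identification of the continuous dual of the completion $\widehat A_\lambda$ with the span of the Taylor-coefficient functionals, together with the triangular change of basis to the $D_P(\cdot)(\lambda)$ (nonzero diagonal because each $\lambda_j\neq0$); the fact that $I\widehat A_\lambda$ is closed (Krull) and equals the closure of the image of $I$; and finally the passage from $\widehat\mu\in I\widehat A_\lambda$ for all $\lambda$ back to $\widehat\mu\in I$ via faithful flatness of completion and the local characterisation of ideal membership. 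One small quibble: the diagonal entries in your triangular change of basis are $\lambda^\beta$ rather than $\lambda^\beta\beta!$ (the $\beta!$ appears only if you normalise by Taylor coefficients rather than derivatives), but this does not affect invertibility. Your closing remark about the one-variable case and about embedded components being the genuine multivariate obstacle is also accurate. In short: the argument is correct, and it is precisely the kind of proof one finds in later expositions of spectral synthesis on $\Z^d$; it simply goes well beyond what the present paper attempts, since here Lefranc's theorem is taken for granted.
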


An extension of Lefranc's Theorem \ref{Lefr} to finitely generated Abelian groups is obvious (see \cite[Theorem 2.23, Theorem 2.24]{MR2279454}).
It follows that every finite dimensional translation invariant linear space of complex valued functions on a finitely generated Abelian group consists of exponential polynomials. This theorem has the following generalization to topological Abelian groups (see e.g. \cite[Theorem 10.1.]{MR1113488}, p.~78).

\begin{theorem}\label{me}
Every finite dimensional translation invariant space of continuous complex valued functions on a topological Abelian group consists of exponential polynomials.
\end{theorem}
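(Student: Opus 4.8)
The plan is to use that translation by a fixed group element restricts to a linear operator on the finite dimensional space in question and that, $G$ being abelian, all of these operators commute. So let $V\subseteq\mathcal C(G)$ be finite dimensional and translation invariant, and for $y\in G$ put $C(y)=\tau_y|_V\in\mathrm{End}(V)$. From $\tau_{y+z}=\tau_y\circ\tau_z$ we get that $C\colon G\to GL(V)$ is a homomorphism; it is continuous, since the matrix of $C(y)$ in a fixed basis of $V$ is recovered from finitely many point evaluations of elements of $V$ and thus depends continuously on $y$; and $\{C(y):y\in G\}$ is a commuting family.

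The first step is to peel off the exponential factors. The unital $\C$-subalgebra $\mathcal A\subseteq\mathrm{End}(V)$ generated by $C(G)$ is finite dimensional and commutative, hence, being Artinian, a finite direct product of local algebras; the corresponding orthogonal idempotents give a decomposition $V=\bigoplus_{j=1}^r V_j$ into $\mathcal A$-invariant---in particular translation invariant---subspaces, on each of which every $a\in\mathcal A$ acts as $\chi_j(a)\,\mathrm{id}_{V_j}$ plus a nilpotent operator, for a suitable algebra homomorphism $\chi_j\colon\mathcal A\to\C$. Setting $\lambda_j(y)=\chi_j\bigl(C(y)\bigr)$ we obtain a continuous homomorphism from $G$ into $\C\setminus\{0\}$, that is, an exponential, and $U_y^{(j)}:=\lambda_j(y)^{-1}\,C(y)|_{V_j}$ defines a continuous homomorphism $G\to GL(V_j)$ whose values are unipotent and pairwise commuting. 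It therefore suffices to prove that every $f\in V_j$ is the product of $\lambda_j$ and a polynomial; summing over $j$ then exhibits an arbitrary element of $V$ as an exponential polynomial.

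Fix $j$ and suppress it from the notation. Here the key device is the finite logarithm. Writing $M_y=U_y-\mathrm{id}$ we have $M_y^{\dim V_j}=0$, so $A_y:=\log U_y=\sum_{m=1}^{\dim V_j-1}\tfrac{(-1)^{m+1}}{m}M_y^m$ is a well defined polynomial expression in $M_y$; it depends continuously on $y$, has nilpotent values, and, because the $M_y$ commute and $U$ is a homomorphism, satisfies $A_{y+z}=A_y+A_z$. Hence $P:=\span\{A_y:y\in G\}$ is a finite dimensional space of pairwise commuting nilpotent operators; fixing a basis $E_1,\dots,E_k$ of $P$ we may write $A_y=\sum_{i=1}^k a_i(y)E_i$, where $a_1,\dots,a_k\colon G\to\C$ are continuous and additive. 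Since the $E_i$ commute and are nilpotent, $U_y=\exp A_y=\prod_{i=1}^k\exp\bigl(a_i(y)E_i\bigr)$ is an ordinary polynomial in $a_1(y),\dots,a_k(y)$ with coefficients in $\mathrm{End}(V_j)$. Evaluating at the identity $0$ of $G$, for $f\in V_j$ we get $f(y)=(\tau_y f)(0)=\lambda_j(y)\,(U_y f)(0)$, and $(U_y f)(0)$ is an ordinary polynomial in $a_1(y),\dots,a_k(y)$; so $f=\lambda_j\cdot Q(a_1,\dots,a_k)$ for an ordinary polynomial $Q$, an exponential monomial.

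The step I expect to require the most care is the first reduction: one has to check that the idempotent decomposition is translation invariant, that each $\lambda_j$ is a genuine exponential (continuous, and nowhere zero because $C(y)=\tau_y|_V$ is invertible), and that $y\mapsto U_y^{(j)}$ really is a homomorphism with unipotent values. Once the problem has been localised to a single exponential together with a unipotent representation, the logarithm computation is essentially forced; its only delicate point is that the linear span of pairwise commuting nilpotent operators again consists of nilpotent operators, which is exactly what makes the exponential series terminate and keeps the resulting expressions polynomial.
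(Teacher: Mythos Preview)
Your argument is correct. The representation $y\mapsto C(y)=\tau_y|_V$ is a continuous homomorphism into $GL(V)$; the Artin decomposition of the commutative subalgebra it generates cuts $V$ into translation invariant pieces on which $C(y)$ has a single eigenvalue $\lambda_j(y)$, and the trace formula $\lambda_j(y)=\tfrac{1}{\dim V_j}\operatorname{tr}\bigl(C(y)|_{V_j}\bigr)$ makes the continuity of each $\lambda_j$ transparent. The logarithm of the unipotent part produces a continuous additive map into a space of commuting nilpotents, and your final observation---that the $\C$-span of commuting nilpotents consists of nilpotents, so $\exp$ terminates---closes the argument cleanly.

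As for comparison: the paper does not give its own proof of this theorem. It is quoted as a known result with a reference to \cite[Theorem~10.1, p.~78]{MR1113488} and then used as input for the later Montel--type results. Your write-up is in fact close in spirit to the proof in that reference, which also passes through the simultaneous generalized eigenspace decomposition of the translation operators and identifies the polynomial part via additive functions; your use of the matrix logarithm is a tidy way of packaging the step from ``unipotent representation'' to ``polynomial in finitely many additive functions''. The self-critique in your last paragraph correctly identifies the only places requiring care, and you have handled them.
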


Another important contribution to the subject is the theorem of P.~M.~Anselone and J.~Korevaar in \cite{MR0169048}.

\begin{theorem}\label{AnsKor} (Anselone--Korevaar)
Every finite dimensional translation invariant space of continuous complex valued functions, or complex valued Schwartz distributions on the reals consists of exponential polynomials.
\end{theorem}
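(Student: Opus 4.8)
The plan is to exploit the fact that, because $V$ is finite dimensional and translation invariant, the action of $\R$ on $V$ by translations is a one--parameter matrix group, and then to read off from its infinitesimal generator a single linear constant--coefficient ordinary differential equation satisfied by every element of $V$. Since the assertion for continuous functions is already the special case $G=\R$ of Theorem \ref{me}, the only genuinely new content concerns Schwartz distributions; the argument sketched below is uniform and re--proves the continuous case as well.

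First I would fix a basis $f_1,\dots,f_n$ of $V$ and write, using translation invariance,
\[
\tau_y f_i=\sum_{j=1}^n c_{ij}(y)\,f_j\qquad (y\in\R),
\]
with scalar coefficient functions $c_{ij}$. The relation $\tau_{y+z}=\tau_y\circ\tau_z$ then forces the matrix $C(y)=\big(c_{ij}(y)\big)$ to be a homomorphism of $\R$ into $GL_n(\C)$ with $C(0)=I$. To obtain continuity of $C$ I would choose test functions $\psi_1,\dots,\psi_n\in\mathcal D(\R)$ for which $M=\big(\langle f_j,\psi_k\rangle\big)_{j,k}$ is invertible — possible precisely because $f_1,\dots,f_n$ are linearly independent in $\mathcal D'(\R)$ — and pair the displayed identity with $\psi_k$ to obtain $\langle f_i,\tau_{-y}\psi_k\rangle=\sum_j c_{ij}(y)\langle f_j,\psi_k\rangle$. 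Since $y\mapsto\tau_{-y}\psi_k$ is a smooth $\mathcal D(\R)$--valued map, $C(y)$ is the solution of a fixed invertible linear system with right hand side depending smoothly on $y$, hence $C$ is continuous (indeed smooth); in the case of continuous functions one may instead use point evaluations at suitable $x_k$ in place of the $\psi_k$.

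Next I would invoke the classical fact that a continuous one--parameter subgroup of $GL_n(\C)$ has the form $C(y)=\exp(yA)$ for a unique $A\in M_n(\C)$, so each $c_{ij}$ is real analytic. Differentiating $f_i(x+y)=\sum_j c_{ij}(y) f_j(x)$ in $y$ at $y=0$ shows that $Df_i=\sum_j c_{ij}'(0) f_j\in V$, where $D=\tfrac{d}{dx}$; hence $D$ maps $V$ into itself, represented in the chosen basis by $A=C'(0)$. By the Cayley--Hamilton theorem the characteristic polynomial $p$ of $A$, monic of degree $n=\dim V$, satisfies $p(A)=0$, and therefore
\[
p(D)f=0\qquad\text{for every } f\in V .
\]
Then I would finish by the classical structure theorem for linear ODEs with constant coefficients: the solutions of $p(D)u=0$ are exactly the linear combinations of the exponential monomials $x\mapsto x^{k}e^{\lambda x}$ with $\lambda$ a root of $p$ and $0\le k<$ its multiplicity, and this solution space is the same in $C^\infty(\R)$ as in $\mathcal D'(\R)$ — indeed, if $u$ is a distributional solution then $u*\varphi_\varepsilon$ lies in the finite dimensional, hence closed, classical solution space $W$ for every approximate identity $\varphi_\varepsilon$, and $u*\varphi_\varepsilon\to u$, so $u\in W$. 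Thus every $f\in V$ is an exponential polynomial.

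I expect the main obstacle to be technical rather than conceptual: in the distributional setting one must replace point evaluations by a test--function frame and keep track of the smoothness of $y\mapsto\tau_{-y}\psi_k$, and one must justify that distributional solutions of a nonzero constant--coefficient ODE on the line are classical solutions (an ellipticity/hypoellipticity argument on $\R$, or the mollification argument above). The one genuinely external input is the classical description of the solution space of a linear ODE with constant coefficients; everything else is linear algebra together with the elementary theory of one--parameter matrix groups.
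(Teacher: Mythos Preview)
The paper does not prove this theorem at all: it is merely quoted from the reference \cite{MR0169048} (Anselone and Korevaar), so there is no ``paper's own proof'' to compare against. Your argument is correct and is, in essence, precisely the classical Anselone--Korevaar proof: represent the translation action on the finite dimensional $V$ by a one--parameter matrix group $C(y)=\exp(yA)$, conclude that $D$ preserves $V$ with matrix $A$, apply Cayley--Hamilton to obtain a constant--coefficient ODE $p(D)f=0$ satisfied by every $f\in V$, and identify the solution space with exponential polynomials (handling the distributional case by mollification into the finite dimensional classical solution space).

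One small remark: the continuous--function case is indeed already covered by Theorem~\ref{me}, so only the distributional part carries new content here, and your mollification argument for that step is the standard and correct way to pass from distributional to classical solutions of $p(D)u=0$.
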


For the characterization of exponential monomials we use modified difference operators as they have been introduced in \cite{Sze14d} (see also \cite{Sze14f,  Sze14e, Sze14c}). The definition follows.
\vskip.3cm

Let $G$ be an Abelian group and let $f,\varphi:G\to\C$ be functions. For each $x,y$ in $G$ we define
\begin{equation*}
\Delta_{\varphi;y} f(x)=f(x+y)-\varphi(y) f(x)=\bigl(\tau_y-\varphi(y)\tau_0\bigr) f(x)\,.
\end{equation*}
Then $\Delta_{\varphi;y}$ is called {\it $\varphi$-modified difference operator} or simply {\it modified difference operator}. The higher order modified difference operators are defined in an obvious way, as the products
\begin{equation*}
\Delta_{\varphi;y_1,y_2,\dots,y_{n+1}} = \Delta_{\varphi;y_1}\circ \Delta_{\varphi;y_2}\circ\dots\circ \Delta_{\varphi;y_{n+1}}\,,
\end{equation*}
whenever $n$ is a natural number and $y_1,y_2,\dots,y_{n+1}$ are arbitrary in $G$. In case of $y=y_1,y_2,\dots,y_{n+1}$ we use the notation $\Delta_{\varphi;y}^{n+1}$ for the above product. We note that the translation operators $\tau_y$ obviously commute, hence in this notation the increments $y_1,y_2,\dots,y_{n+1}$ can arbitrarily be reordered.
\vskip.3cm

Modified difference operators can be used to characterize generalized exponential monomials and polynomials. For the details see \cite{Sze14e, Sze14c, Sze14d}. Here we need the following simple result.

\begin{theorem}\label{modchar}
Let $G$ be an Abelian group, $n$ a natural number and let $f,\varphi:G\to\C$ be functions. If $f$ is nonzero, and it satisfies
\begin{equation}\label{nmod}
\Delta_{\varphi;y_1,y_2,\dots,y_{n+1}} f(x)=0
\end{equation}
for each $x,y_1,y_2,\dots,y_{n+1}$ in $G$, then $\varphi$ is an exponential and $f$ is a generalized exponential monomial corresponding to $\varphi$.
\end{theorem}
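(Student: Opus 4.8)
The plan is to induct on $n$: first we use the hypothesis to force $\varphi$ to be an exponential, and then, once we may legitimately divide by $\varphi$, we reduce the assertion about $f$ to Fr\'echet's equation \eqref{Frech1} for the quotient $p=f/\varphi$. For the base case $n=0$ the hypothesis reads $f(x+y)=\varphi(y)f(x)$ for all $x,y\in G$. Setting $x=0$ gives $f(y)=\varphi(y)f(0)$; since $f$ is nonzero we must have $f(0)\neq 0$, so $\varphi=f(0)^{-1}f$, and substituting back yields $f(0)f(x+y)=f(x)f(y)$, whence $\varphi(x+y)=\varphi(x)\varphi(y)$. Moreover $\varphi$ cannot vanish anywhere, for otherwise $\varphi$, and hence $f=f(0)\varphi$, would vanish identically; thus $\varphi$ is an exponential and $f=f(0)\varphi$ is a generalized exponential monomial corresponding to $\varphi$.

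For the inductive step let $n\geq 1$ and assume the statement for $n-1$. If $\Delta_{\varphi;y}f=0$ for every $y\in G$, then $f$ satisfies the hypothesis with $n$ replaced by $0$ and we are done by the base case. Otherwise pick $y_0\in G$ with $g:=\Delta_{\varphi;y_0}f\neq 0$. Since the modified difference operators commute, $\Delta_{\varphi;y_1,\dots,y_n}g=\Delta_{\varphi;y_0,y_1,\dots,y_n}f=0$ for all $y_1,\dots,y_n\in G$, so the inductive hypothesis applies to $g$ and in particular $\varphi$ is an exponential. In either case $\varphi$ is an exponential, hence nowhere zero, and $p:=f/\varphi$ is a well-defined function on $G$.

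It remains to check that $p$ is a generalized polynomial. Using $\varphi(x+y)=\varphi(x)\varphi(y)$, a routine induction on $k$ (the case $k=1$ being a one-line computation, and the passage from $k$ to $k+1$ following by applying $\Delta_{\varphi;y_{k+1}}$ and again invoking multiplicativity of $\varphi$) gives the conjugation identity
\[
\Delta_{\varphi;y_1,\dots,y_k}(p\varphi)(x)=\varphi(x+y_1+\dots+y_k)\,\Delta_{y_1,\dots,y_k}p(x)
\]
for all $x,y_1,\dots,y_k\in G$. Taking $k=n+1$ and using the hypothesis $\Delta_{\varphi;y_1,\dots,y_{n+1}}f=0$, we obtain $\varphi(x+y_1+\dots+y_{n+1})\,\Delta_{y_1,\dots,y_{n+1}}p(x)=0$; as $\varphi$ never vanishes, $p$ satisfies \eqref{Frech1} and is therefore a generalized polynomial. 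Finally $p\neq 0$ because $f\neq 0$, so $f=p\varphi$ is a generalized exponential monomial corresponding to $\varphi$, completing the induction.

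The argument is essentially routine, and the only delicate point is logical ordering: one cannot pass to $p=f/\varphi$ until one knows that $\varphi$ is an exponential (in particular nowhere zero), which is precisely why the base case and the dichotomy in the inductive step --- either $f$ is already annihilated by a single modified difference operator, or some $\Delta_{\varphi;y_0}f$ is a nonzero solution of the lower-order equation --- must be dealt with before the clean conjugation computation can be invoked.
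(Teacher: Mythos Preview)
Your proof is correct and follows essentially the same strategy as the paper: induction on $n$ to show $\varphi$ is an exponential, followed by the conjugation identity $\Delta_{\varphi;y_1,\dots,y_{k}}(p\varphi)(x)=\varphi(x+y_1+\cdots+y_k)\,\Delta_{y_1,\dots,y_k}p(x)$ to reduce to Fr\'echet's equation for $p=f\cdot\widecheck{\varphi}$. The only differences are cosmetic: you supply a self-contained argument for the base case $n=0$ (the paper cites \cite{Sze14d}), and your dichotomy in the inductive step peels off a single $\Delta_{\varphi;y_0}$ and invokes the hypothesis at level $n-1$, whereas the paper peels off $n$ of them and invokes the base case --- both routes are valid and lead to the same conclusion.
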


\begin{proof}
We prove by induction on $n$. The case $n=0$ has been proved in \cite{Sze14d}. Suppose that $n\geq 1$ and $f\ne 0$ satisfies the above equation for each $x,y_1,y_2,\dots,y_{n+1}$ in $G$. If there exist elements $y_1,y_2,\dots,y_n$ in $G$ such that the function $g=\Delta_{\varphi;y_1,y_2,\dots,y_n} f$ is nonzero, then $\Delta_{\varphi;y} g(x)=0$ for each $x,y$ in $G$, which implies that $\varphi$ is an exponential, by the first part of the proof. On the other hand, if $g$ is identically zero for every choice of $y_1,y_2,\dots,y_n$ in $G$, then $\varphi$ is an exponential, by the induction hypothesis. 
\vskip.3cm

For the second statement we observe that
\begin{equation}\label{difmod}
\Delta_{\varphi;y_1,y_2,\dots,y_{n+1}} f(x)=\varphi(x+y_1+y_2+\dots+y_{n+1}) \Delta_{y_1,y_2,\dots,y_{n+1}} \bigl( f\cdot \widecheck{\varphi})(x)
\end{equation}
holds for each $x,y_1,y_2,\dots,y_{n+1}$ in $G$, which can be verified by easy calculation. Here $\widecheck{\varphi}$ is defined by $\widecheck{\varphi}(x)=\varphi(-x)$ for each $x$ in $G$. Equation \eqref{difmod} implies that \eqref{nmod} holds if and only if the function $f\cdot \widecheck{\varphi}$ satisfies Fr\'echet's functional equation \eqref{Frech1}, that is, $f=p\cdot \varphi$ with some generalized polynomial $p$.
\end{proof}

\section{A characterization of local exponential monomials}

In this section we prove a characterization theorem for local exponential monomials, which is based on the following theorem (see \cite[Theorem 2]{AlSz14}):

\begin{theorem} \label{TLP}
Let $G$ be an Abelian group. The function $f:G\to\C$ is a local polynomial if and only if for each positive integer $t$, and elements $g_1,g_2,\dots,g_t$ in $G$ there are natural numbers $n_i$ for $i=1,2,\dots,t$ such that
\begin{equation}
\Delta_{g_i}^{n_i+1}*f(x)=0\,
\end{equation}
holds for $i=1,2,\dots,t$ and for all $x$ in the subgroup generated by $g_1,g_2,\dots,g_t$.
\end{theorem}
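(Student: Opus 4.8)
The plan is to prove the two implications separately; the ``only if'' part is immediate, and the ``if'' part rests on a small commutative-algebra argument that upgrades the hypothesis into a \emph{uniform} degree bound on each finitely generated subgroup. For the ``only if'' direction, suppose $f$ is a local polynomial, fix $t$ and $g_1,\dots,g_t$ in $G$, and put $H=\langle g_1,\dots,g_t\rangle$. Then $f|_H$ is a polynomial on $H$, say of the form $P(a_1,\dots,a_k)$ with $P$ an ordinary polynomial of degree $m$ and $a_1,\dots,a_k$ additive on $H$. Applying $\Delta_{y_1}$ cancels the top homogeneous part, so $\Delta_{y_1}f$ is again a polynomial in $a_1(x),\dots,a_k(x)$ of degree at most $m-1$; iterating, $\Delta_{y_1,\dots,y_{m+1}}f$ vanishes identically on $H$. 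In particular $\Delta_{g_i}^{m+1}f(x)=0$ for all $x\in H$, and we may take $n_i=m$ for every $i$.

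For the converse, fix a finitely generated subgroup $H$ of $G$; we must show that $f|_H$ is a polynomial. Choose generators $g_1,\dots,g_t$ of $H$ and apply the hypothesis to \emph{this} system, obtaining natural numbers $n_1,\dots,n_t$ with $\Delta_{g_i}^{n_i+1}f(x)=0$ for all $x\in H$. Let $M$ be the linear span of the translates $\tau_z f|_H$ with $z$ ranging over $H$. Then $M$ is a translation invariant space of functions on $H$ on which every $\tau_{g_i}$ acts invertibly (with inverse $\tau_{-g_i}$) and on which $\Delta_{g_i}^{n_i+1}=0$, since this operator commutes with all translations and annihilates $f|_H$. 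Writing $\tau_{g_i}=I+\Delta_{g_i}$, the operator $\Delta_{g_i}$ is nilpotent of index at most $n_i+1$ on $M$, hence $\tau_{g_i}$ is unipotent there and $\tau_{g_i}^{-1}=\sum_{j=0}^{n_i}(-1)^j\Delta_{g_i}^{j}$ on $M$. Since the $\Delta_{g_i}$ commute, the subalgebra $\mathcal A$ of $\mathrm{End}(M)$ generated by $\tau_{g_1},\dots,\tau_{g_t}$ (it then automatically contains their inverses) is a homomorphic image of the finite dimensional local ring $R=\C[X_1,\dots,X_t]/(X_1^{n_1+1},\dots,X_t^{n_t+1})$ under $X_i\mapsto\Delta_{g_i}$.

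Now let $y\in H$ be arbitrary and write $y=k_1g_1+\dots+k_tg_t$ with integers $k_i$. On $M$ we have $\tau_y=\tau_{g_1}^{k_1}\cdots\tau_{g_t}^{k_t}=\prod_{i=1}^t(I+\Delta_{g_i})^{k_i}$, and expanding each factor (using the finite expansion of $\tau_{g_i}^{-1}$ displayed above when $k_i<0$) shows that $\tau_y$ is the image under $X_i\mapsto\Delta_{g_i}$ of an element of $1+\mathfrak m$, where $\mathfrak m=(X_1,\dots,X_t)$ is the maximal ideal of $R$; hence $\Delta_y=\tau_y-I$ is the image of an element of $\mathfrak m$. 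Since every monomial of total degree $1+\sum_i n_i$ must involve some $X_i$ to a power exceeding $n_i$, we have $\mathfrak m^{N}=0$ in $R$ for $N=1+\sum_i n_i$, and therefore $\Delta_y^{N}=0$ on $M$. Thus $\Delta_y^{N}f(x)=0$ for all $x,y\in H$ with one and the same $N$, so $f|_H$ satisfies \eqref{Frech2}, hence \eqref{Frech1}, hence it is a generalized polynomial on $H$; being a generalized polynomial on a finitely generated group, it is a polynomial there. As $H$ was an arbitrary finitely generated subgroup, $f$ is a local polynomial.

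The only genuine obstacle is the step that converts the per‑generator information supplied by the hypothesis into a \emph{single} exponent $N$ valid for $\Delta_y$ simultaneously for \emph{all} $y\in H$; this is exactly what the nilpotency of the maximal ideal of the finite dimensional quotient $R$ provides. The attendant technical care is modest: one must carry out the operator calculus inside the translation invariant span $M$, where the translations $\tau_{g_i}$ are genuinely unipotent, so that the expansions of $\tau_{g_i}^{-1}$ and the passage to the ring $R$ are legitimate.
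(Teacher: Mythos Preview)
Your proof is correct. Both directions are sound: in the ``if'' part, the key step---that $\Delta_y$ lies in the image of the nilpotent maximal ideal $\mathfrak m$ of $R=\C[X_1,\dots,X_t]/(X_1^{n_1+1},\dots,X_t^{n_t+1})$, whence $\Delta_y^{N}=0$ on $M$ for $N=1+\sum_i n_i$---is carried out cleanly, including the handling of negative exponents via the finite geometric expansion of $(I+\Delta_{g_i})^{-1}$.

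Your route differs genuinely from the paper's. The paper argues combinatorially: with $n=\max_i n_i$ and $m=t(n+1)$, any product $\Delta_{h_1}\cdots\Delta_{h_m}$ with the $h_j$ chosen from the generating set $\{g_1,\dots,g_t\}$ must, by pigeonhole, contain some $\Delta_{g_i}$ at least $n_i+1$ times and hence annihilate $f|_H$; it then invokes Laczkovich's lemma (if $\Delta_{h_1}\cdots\Delta_{h_m}f=0$ for all $h_j$ in a generating set, then it holds for all $h_j$ in $H$) to pass from generators to arbitrary increments. Your argument avoids that black-box lemma by working inside the translation-invariant span $M$ and packaging the same pigeonhole as the vanishing of $\mathfrak m^{N}$ in the Artinian local ring $R$; the passage from generators to arbitrary $y$ is then just the unipotence of $\tau_{g_i}$ and the group law $\tau_y=\prod\tau_{g_i}^{k_i}$. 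A pleasant by-product is a sharper uniform bound: you obtain $\Delta_y^{1+\sum_i n_i}f=0$, whereas the paper's pigeonhole gives exponent $t(\max_i n_i+1)$.
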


Using this theorem we have the following result.

\begin{theorem} \label{TLEM}
Let $G$ be an Abelian group. The function $f:G\to\C$ is a local exponential  monomial if and only if there exists an exponential $m$ on $G$, and for each positive integer $t$, and elements $g_1,g_2,\dots,g_t$ in $G$ there are natural numbers $n_i$ for $i=1,2,\dots,t$ such that
\begin{equation}\label{Frech4}
\Delta_{m;g_i}^{n_i+1} f(x)=0
\end{equation}
holds for $i=1,2,\dots,t$ and for all $x$ in the subgroup generated by $g_1,g_2,\dots,g_t$.
\end{theorem}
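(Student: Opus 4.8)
The plan is to reduce both implications to the already established characterization of local polynomials (Theorem \ref{TLP}) by means of the substitution $h=f\cdot\widecheck m$, where $\widecheck m(x)=m(-x)=m(x)^{-1}$, combined with identity \eqref{difmod}. Indeed, \eqref{difmod} applied with $\varphi=m$ and all increments equal to $g_i$ reads
\begin{equation*}
\Delta_{m;g_i}^{n_i+1}f(x)=m\bigl(x+(n_i+1)g_i\bigr)\,\Delta_{g_i}^{n_i+1}(f\cdot\widecheck m)(x)\,,
\end{equation*}
and since an exponential never vanishes, the left side is zero precisely when $\Delta_{g_i}^{n_i+1}h(x)=0$. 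Thus, for a fixed exponential $m$, condition \eqref{Frech4} for $f$ is literally the defining condition of Theorem \ref{TLP} for $h=f\cdot\widecheck m$.

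For sufficiency, suppose such an $m$ exists and \eqref{Frech4} holds. By the displayed equivalence, $h=f\cdot\widecheck m$ satisfies the hypothesis of Theorem \ref{TLP}, hence $h$ is a local polynomial. Then on every finitely generated subgroup $H\leq G$ the restriction $h|_H$ is a polynomial and $m|_H$ is an exponential, so $f|_H=h|_H\cdot m|_H$ is an exponential monomial; that is, $f$ is a local exponential monomial.

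For necessity, let $f$ be a local exponential monomial. If $f\equiv 0$ we take $m\equiv 1$ and $n_i=0$, so assume $f\not\equiv 0$. Let $\mathcal F$ be the family of finitely generated subgroups $H$ with $f|_H\neq 0$; for each such $H$ write $f|_H=p_H\cdot\varphi_H$ with $p_H$ a polynomial and $\varphi_H$ the uniquely determined corresponding exponential on $H$. Since finitely generated abelian groups are Noetherian, sums and intersections of members of $\mathcal F$ are again finitely generated, $\mathcal F$ is upward directed ($H_1,H_2\in\mathcal F$ implies $H_1+H_2\in\mathcal F$, as $f$ is already nonzero on $H_1$), and every element, indeed every pair of elements, of $G$ lies in a member of $\mathcal F$. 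Restricting $f|_{H_1+H_2}=p_{H_1+H_2}\cdot\varphi_{H_1+H_2}$ to $H_i$ and invoking uniqueness of the corresponding exponential forces $\varphi_{H_1+H_2}|_{H_i}=\varphi_{H_i}$, whence $\varphi_{H_1}$ and $\varphi_{H_2}$ agree on $H_1\cap H_2$. Consequently $m(x):=\varphi_H(x)$, for any $H\in\mathcal F$ containing $x$, is a well-defined function $G\to\C$; since $m$ coincides on each $H\in\mathcal F$ with the exponential $\varphi_H$ and every pair of elements lies in a common such $H$, the function $m$ is an exponential and $m|_H=\varphi_H$ for all $H\in\mathcal F$. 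Now set $h=f\cdot\widecheck m$. On $H\in\mathcal F$ we get $h|_H=p_H\cdot\varphi_H\cdot\widecheck{\varphi_H}=p_H$, a polynomial, while on a finitely generated $H\notin\mathcal F$ we have $f|_H=0$, hence $h|_H=0$; in either case $h$ is a local polynomial. By Theorem \ref{TLP}, for every $t$ and every $g_1,\dots,g_t$ there are natural numbers $n_i$ with $\Delta_{g_i}^{n_i+1}h(x)=0$ for all $x$ in $\langle g_1,\dots,g_t\rangle$, and the displayed identity turns this back into \eqref{Frech4}.

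The one delicate point is the construction of the global exponential $m$ in the necessity part: one must verify that the locally defined exponentials $\varphi_H$ are mutually compatible, which rests on the uniqueness of the exponential attached to a nonzero exponential monomial together with the fact that the family of finitely generated subgroups on which $f$ does not vanish is upward directed and cofinal among all finitely generated subgroups; the finitely generated subgroups on which $f$ vanishes (where no corresponding exponential is defined) have to be handled separately, but there the assertion is trivial.
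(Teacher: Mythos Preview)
Your proof is correct and follows essentially the same approach as the paper: both directions are reduced to Theorem~\ref{TLP} via the identity~\eqref{difmod} and the substitution $h=f\cdot\widecheck m$, and the global exponential $m$ in the necessity part is assembled by gluing the local exponentials over an upward-directed cofinal family of finitely generated subgroups on which $f$ is nonzero. Your treatment is more explicit about the compatibility of the local exponentials (the paper simply declares well-definedness ``obvious'' after fixing a single $H_0$ with $f|_{H_0}\neq 0$ and using the family of finitely generated subgroups containing $H_0$), and you route the final step of the necessity back through Theorem~\ref{TLP} rather than arguing directly, but these are cosmetic variations on the same argument.
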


\begin{proof}
To prove the sufficiency we apply the identity \eqref{difmod}, which implies that for each positive integer $t$, and elements $g_1,g_2,\dots,g_t$ in $G$ there are natural numbers $n_i$ for $i=1,2,\dots,t$ such that
\begin{equation}\label{Frech5}
\Delta_{g_i}^{n_i+1} \bigl(f\cdot \widecheck{m}\bigr)(x)=0
\end{equation}
holds for $i=1,2,\dots,t$, and for all $x$ in the subgroup generated by $g_1,g_2,\dots,g_t$. Indeed, exponentials never vanish, hence form \eqref{difmod} we immediately obtain \eqref{Frech5}. Applying Theorem \ref{TLP} we get that $f\cdot \widecheck{m}$ is a local polynomial, which implies our statement.
\vskip.4cm

For the proof of necessity we observe that if $f$ is a nonzero local exponential monomial on $G$, then for each finitely generated subgroup $H$ there exists an exponential $m_H$ on $H$ and a natural number $n_H$ such that 
\begin{equation*}
\Delta_{m_H;h}^{n_H+1} f(x)=0
\end{equation*}
holds for each $x,h$ in $H$. As $f$ is nonzero, there exists a finitely generated subgroup $H_0$ such that the restriction of $f$ to $H_0$ is nonzero. Let $\mathcal F$ denote the set of all finitely generated subgroups of $G$, which include $H_0$. Clearly, $\bigcup \mathcal F=G$. We define $m:G\to\C$ by
\begin{equation*}
m(x)=m_H(x)\,,
\end{equation*}
whenever $x$ is in $H$ with $H$ in $\mathcal F$. It is obvious that $m$ is well-defined on $G$. If $x,y$ are in $G$, then there is an $H$ in $\mathcal F$ such that $x,y$ are in $H$, and we have
\begin{equation*}
m(x+y)=m_H(x+y)=m_H(x) m_H(y)=m(x) m(y)\,,
\end{equation*} 
that is, $m$ is an exponential on $G$. We have proved that if $f$ is a local exponential monomial on $G$, then there exists an exponential $m$ on $G$, and for each finitely generated subgroup $H$ there exists a natural number $n_H$ such that 
\begin{equation*}
\Delta_{m;h}^{n_H+1} f(x)=0
\end{equation*}
holds for each $x,h$ in $H$, which proves the necessity of our condition and our proof is complete.
\end{proof}

In \cite{AlSz14} we also proved the following result.

\begin{theorem}\label{cor_dist}
Let $t$ be a positive integer, let $h_1,h_2,\dots,h_t$ be elements in $\R^d$ and let $n_1,n_2,\dots,n_t$ be natural numbers. Suppose that the complex valued distribution $u$ satisfies 
\begin{equation} \label{poli}
\Delta_{h_k}^{n_k+1}u=0 
\end{equation}
for $k=1,2,\dots,t$. If the vectors $h_1,h_2,\dots,h_t$ generate a dense subgroup in $\R^d$, then $f$ is an ordinary polynomial of degree at most $n_1+n_2+\cdots+n_t+t-1$.
In particular, generalized polynomials and local polynomials in distributional sense are ordinary polynomials.
\end{theorem}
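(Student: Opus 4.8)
The plan is to reduce, by regularization, to the case of a smooth solution, and then to upgrade the hypothesis --- which a priori is only a Fr\'echet-type equation along the finitely many directions $h_1,\dots,h_t$ --- to the genuine Fr\'echet equation on all of $\R^d$, where classical results finish the job. First I would regularize: let $(\rho_\varepsilon)_{\varepsilon>0}$ be a standard approximate identity in $C_c^\infty(\R^d)$ and set $u_\varepsilon=u*\rho_\varepsilon$. Since translation of distributions commutes with convolution, so does each difference operator $\Delta_{h_k}$, whence $\Delta_{h_k}^{n_k+1}u_\varepsilon=\bigl(\Delta_{h_k}^{n_k+1}u\bigr)*\rho_\varepsilon=0$ for every $k$; moreover $u_\varepsilon\in C^\infty(\R^d)$ and $u_\varepsilon\to u$ in $\mathcal D'(\R^d)$. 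It then suffices to show that every \emph{smooth} $v$ with $\Delta_{h_k}^{n_k+1}v=0$ $(k=1,\dots,t)$ is an ordinary polynomial of degree at most $M:=n_1+\dots+n_t$: the space of polynomials of degree $\le M$ is finite dimensional, hence closed in $\mathcal D'(\R^d)$, so the limit $u$ lies in it as well, and $M\le n_1+\dots+n_t+t-1$.

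Next, for a continuous $v$ satisfying the system, I would pass from the finitely many directions to all directions. Density of $\Gamma:=\langle h_1,\dots,h_t\rangle$ forces the $h_k$ to span $\R^d$ (their real span is closed and contains a dense set). For fixed $x\in\R^d$ the function $F_x\colon\Z^t\to\C$, $F_x(m)=v\bigl(x+\sum_i m_i h_i\bigr)$, satisfies $\Delta_{e_k}^{n_k+1}F_x=0$ for each $k$ (this is $\Delta_{h_k}^{n_k+1}v=0$ read along the $k$-th line), so variable by variable $F_x$ is a polynomial of degree $\le n_k$ in $m_k$, hence of total degree $\le M$. Therefore, for every $g=\sum_i c_i h_i$ with $c\in\Z^t$ we get $\Delta_g^{M+1}v(x)=0$, since it equals the $(M+1)$-st difference of $F_x$ in the direction $c$ evaluated at $0$; that is, $\Delta_g^{M+1}v\equiv0$ for all $g\in\Gamma$. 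But for fixed $x$ the map $g\mapsto\Delta_g^{M+1}v(x)=\sum_{j=0}^{M+1}(-1)^{M+1-j}\binom{M+1}{j}v(x+jg)$ is continuous in $g$, so vanishing on the dense set $\Gamma$ it vanishes for every $g\in\R^d$. Hence $\Delta_y^{M+1}v=0$ for all $y\in\R^d$.

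Finally I would invoke the classical fact that a continuous complex-valued function on $\R^d$ annihilated by $\Delta_y^{M+1}$ for every $y$ is a continuous generalized polynomial, hence an ordinary polynomial of degree at most $M$ (cf.\ \cite{MR0265798,MR2279454} and Theorem~\ref{me}). Applied to each $u_\varepsilon$, together with the closedness remark above, this yields that $u$ is an ordinary polynomial of degree at most $M=n_1+\dots+n_t\le n_1+\dots+n_t+t-1$. The last assertion of the theorem then follows by applying the result to finitely many increments that generate a dense subgroup of $\R^d$ (such subgroups exist).

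The hard part will be passing from control along the finitely many directions $h_k$ to the full Fr\'echet equation: a single equation $\Delta_h^{n+1}v=0$ says nothing about the degree of a polynomial solution once $d\ge2$ (take $v(x)=x_1^N$, $h=e_2$), so the density hypothesis must be used in an essential way, and the device I rely on is that $\Delta_g^{M+1}v$, known to vanish only for $g\in\Gamma$, is \emph{continuous in the increment} $g$ and therefore vanishes everywhere. The distribution-to-function issue is handled painlessly by regularization; I note that the alternative Fourier route --- from $(e^{i\langle\xi,h_k\rangle}-1)^{n_k+1}\widehat u=0$ and density one sees that $\widehat u$ is supported at the origin, so $u$ is a polynomial --- would instead require first showing that $u$ is tempered, which is exactly the growth information that regularization lets us bypass.
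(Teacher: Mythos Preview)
The paper does not actually prove this theorem here; it is quoted verbatim as a result from the authors' earlier paper \cite{AlSz14} (``In \cite{AlSz14} we also proved the following result''), so there is no in-paper argument to compare against.

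Your proposal is correct and self-contained. The regularization step is clean and reduces the distributional statement to a statement about continuous (indeed smooth) functions; the passage from the $t$ directions $h_1,\dots,h_t$ to all increments via the auxiliary function $F_x$ on $\Z^t$ is the right idea and is carried out correctly (the key point being that $\Delta_{e_k}^{n_k+1}F_x=0$ for each $k$ forces $F_x$ to be a polynomial on $\Z^t$ of total degree at most $M=\sum_k n_k$, which is an elementary induction in the Newton binomial basis). The continuity-in-$g$ argument then upgrades $\Delta_g^{M+1}v=0$ from $g\in\Gamma$ to all $g\in\R^d$, after which Fr\'echet's classical theorem for continuous functions finishes the job. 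The closedness of the finite-dimensional polynomial space in $\mathcal D'$ lets you pass to the limit $\varepsilon\to0$ without loss.

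Two remarks. First, your argument actually yields the sharper degree bound $\deg u\le n_1+\cdots+n_t$, strictly better than the stated $n_1+\cdots+n_t+t-1$; this is not a gap but an improvement. Second, the sentence ``density of $\Gamma$ forces the $h_k$ to span $\R^d$'' is true but never used afterwards --- your proof goes through without it, since the density of $\Gamma$ enters only via continuity of $g\mapsto\Delta_g^{M+1}v(x)$.
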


Using this theorem we obtain the following result exactly in the same way as above.

\begin{theorem}\label{TLD}
Let $u$ be a complex valued distribution on $\R^d$ and $t$ a positive integer. If there exists an exponential $m$ on $\R^d$, and there are elements $g_1,g_2,\dots,g_t$ generating a dense subgroup in $\R^d$, further there are natural numbers $n_i$ for $i=1,2,\dots,t$ such that
\begin{equation}\label{Frech6}
\Delta_{m;g_i}^{n_i+1} u(x)=0
\end{equation}
holds for $i=1,2,\dots,t$ and for all $x$ in the subgroup generated by $g_1,g_2,\dots,g_t$, then $u$ is an exponential monomial.
\end{theorem}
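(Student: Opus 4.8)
The plan is to reproduce, at the level of distributions, the argument used for the sufficiency part of Theorem~\ref{TLEM}, with Theorem~\ref{cor_dist} playing the role that Theorem~\ref{TLP} played there. A continuous homomorphism of $\R^d$ into $\C_0$ is necessarily of the form $x\mapsto e^{\langle\lambda,x\rangle}$ for some $\lambda$ in $\C^d$, so the exponential $m$ and its reflection $\widecheck m=1/m$ are smooth functions on $\R^d$ that vanish nowhere; consequently the product $u\cdot\widecheck m$ is a well-defined complex valued distribution on $\R^d$, and \eqref{Frech6} is to be read as the identity $\Delta_{m;g_i}^{n_i+1}u=0$ in $\mathcal D'(\R^d)$.

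The heart of the matter is to transfer this hypothesis to $u\cdot\widecheck m$. Since $m$ is an exponential, the operator identity $\tau_y\circ M_m=m(y)\,M_m\circ\tau_y$ holds on $\mathcal D'(\R^d)$, where $M_m$ denotes multiplication by the smooth function $m$ --- this is immediate from $m(x+y)=m(x)m(y)$ together with the definitions of translation and of multiplication by a smooth function on distributions. Combining it with $M_m\circ M_{\widecheck m}=\mathrm{id}$ one obtains, for every $y$ in $\R^d$ and every natural number $k$,
\begin{equation*}
\Delta_{m;y}^{k}=m(y)^{k}\,M_m\circ\Delta_{y}^{k}\circ M_{\widecheck m}\,,
\end{equation*}
which is precisely the distributional form of the identity \eqref{difmod}. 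Applying this with $y=g_i$ and $k=n_i+1$, and using that $m$ never vanishes, we see that \eqref{Frech6} is equivalent to
\begin{equation*}
\Delta_{g_i}^{n_i+1}\bigl(u\cdot\widecheck m\bigr)=0\qquad(i=1,2,\dots,t)\,.
\end{equation*}

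Finally I would apply Theorem~\ref{cor_dist} to the distribution $u\cdot\widecheck m$ with the vectors $h_i=g_i$: since $g_1,g_2,\dots,g_t$ generate a dense subgroup of $\R^d$, it follows that $u\cdot\widecheck m$ equals an ordinary polynomial $p$, of degree at most $n_1+n_2+\dots+n_t+t-1$. Hence $u=p\cdot m$ is the product of an ordinary polynomial and an exponential, i.e.\ an exponential monomial on $\R^d$, which is the assertion. The only step that is not a verbatim repetition of the group-theoretic argument is checking that the commutation identity $\tau_y\circ M_m=m(y)M_m\circ\tau_y$, and hence \eqref{difmod}, remains valid for distributions; I expect this to be the (minor) main obstacle, and it is dispatched by testing both sides against an arbitrary $\phi$ in $\mathcal D(\R^d)$ and reducing to the already known functional identity.
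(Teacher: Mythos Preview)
Your proof is correct and follows essentially the same route as the paper, which simply remarks that the result is obtained ``exactly in the same way as above'' from Theorem~\ref{cor_dist} via the identity~\eqref{difmod}. You are in fact slightly more careful than the paper in spelling out why $u\cdot\widecheck m$ is well-defined and why \eqref{difmod} persists in the distributional setting.
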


We recall that if $f:\R^d\to\C$ is a function, then $\tau_h$ is defined by
$$
\tau_hf(x)=f(x+h)\,,
$$
and if $u$ is a complex valued distribution on $\R^d$, then 
$$
(\tau_hu)(\phi)=u(\tau_{-h}\phi)\,,
$$
where $x,h$ are in $\R^d$ and $\phi$  is an arbitrary test function. Then $\tau_h u$ is again a distribution. Consequently, we have 
$$
\Delta_{\varphi;h} u=\bigl(\tau_{h}-\varphi(h) \tau_0\bigr) u
$$
for each distribution $u$, function $\varphi:G\to\C$ and element $h$ in $\R^d$. Then the meaning of $\Delta_{\varphi;h_1,h_2,\dots,h_s} u$ is obvious, too. Finally, when we claim that the distribution $u$ is an exponential polynomial, we mean that there exists an exponential polynomial $f:\mathbb{R}^d\to \mathbb{C}$ such that $u=f$ in distributional sense. In particular, $u$ is a locally integrable function and $u(x)=f(x)$ almost everywhere. 
\vskip.3cm

We summarize our results in the following corollary.
 
 \begin{corollary} \label{coro1L}
We suppose that $t$ is a positive integer, and either of the following possibilities holds:
\begin{enumerate}
\item $G$ is a finitely generated Abelian group with generators $h_1,\dots,h_t$, and \hbox{$f:G\to\C$}  is a function.
\item $G$ is a topological Abelian group, in which the elements $h_1,\dots,h_t$ generate a dense subgroup in $G$, and  $f:G\to\C$ is a continuous function.
\item $G=\mathbb{R}^d$,  the elements $h_1,\dots,h_t$ generate a dense subgroup, and $f$ is a complex valued distribution on $\R^d$.
\end{enumerate}
If there are natural numbers $n_1,n_2,\dots,n_t$, and there is an exponential $m:G\to\C$ such that $f$ satisfies 
 \[
 \Delta_{m;h_k}^{n_k}f=0 
 \]
 for $k=1,2,\dots,t$, then $f$ is an exponential monomial corresponding to the exponential function $m$.
\end{corollary}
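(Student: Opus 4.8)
The plan is to factor out the exponential $m$ and reduce all three cases to a statement about ordinary polynomials that is already in the paper. Put $g=f\cdot\widecheck{m}$. Applying the identity \eqref{difmod} with $\varphi=m$ and all increments equal to $h_k$ gives $\Delta_{m;h_k}^{n_k}f(x)=m(x+n_kh_k)\,\Delta_{h_k}^{n_k}g(x)$; since an exponential never vanishes — and since multiplication by the smooth nowhere-zero function $\widecheck{m}$ as well as translation are defined on distributions — the hypothesis $\Delta_{m;h_k}^{n_k}f=0$ is in each of the three cases equivalent to $\Delta_{h_k}^{n_k}g=0$ for $k=1,\dots,t$. Hence it suffices to show that $g$ is an ordinary polynomial, for then $f=g\cdot m$ is an exponential monomial corresponding to $m$; we may assume $g\ne 0$.

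Case (1), where $h_1,\dots,h_t$ generate $G$, I would handle by lifting $g$ along the surjection $\Z^t\to G$, $e_k\mapsto h_k$: the resulting function on $\Z^t$ has vanishing $n_k$-th difference in the $k$-th coordinate direction for every $k$, hence is an ordinary polynomial on $\Z^t$; pushing this down, $g$ is a generalized polynomial on $G$, and on a finitely generated group every generalized polynomial is an ordinary polynomial (see \cite{MR2167990, MR2968200}), so $g$ is a polynomial (alternatively one verifies the hypotheses of Theorem \ref{TLP} for every finite system of elements and concludes directly). Case (3), where $g$ is a distribution on $\R^d$ with $\Delta_{h_k}^{n_k}g=0$ and $h_1,\dots,h_t$ generating a dense subgroup, is immediate from Theorem \ref{cor_dist}, which gives that $g$ is an ordinary polynomial; this case is precisely Theorem \ref{TLD}.

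Case (2) — $G$ a topological Abelian group, $f$ continuous, and $H:=\langle h_1,\dots,h_t\rangle$ dense — is the substantial one. Since the polynomial $(\lambda-m(h_k))^{n_k}$ is monic with nonzero constant term, the relation $(\tau_{h_k}-m(h_k)\tau_0)^{n_k}f=0$ lets one express every integer translate $\tau_{h_k}^jf$ as a linear combination of $f,\tau_{h_k}f,\dots,\tau_{h_k}^{n_k-1}f$; since the operators $\tau_{h_k}$ commute, the linear span of $\{\tau_yf:y\in H\}$ is contained in the span of the $\prod_k n_k$ functions $\tau_{h_1}^{j_1}\cdots\tau_{h_t}^{j_t}f$ with $0\le j_k<n_k$, so it is finite-dimensional and hence closed in $\mathcal C(G)$. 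A tube-lemma argument (using continuity of $f$ and compactness) shows that $y\mapsto\tau_yf$ is continuous, so by density of $H$ every $\tau_yf$ with $y\in G$ lies in that span; therefore the variety generated by $f$ equals this finite-dimensional translation invariant space, and by Theorem \ref{me} it consists of exponential polynomials. Writing $f=\sum_{i=1}^r p_im_i$ with pairwise distinct exponentials $m_i$ and nonzero polynomials $p_i$, I would then apply $\Delta_{m;h_k}^{n_k}$ to this sum: each $\Delta_{m;h_k}^{n_k}(p_im_i)$ is again of the form $(\text{polynomial})\cdot m_i$, so linear independence of monomials belonging to distinct exponentials forces $\Delta_{m;h_k}^{n_k}(p_im_i)=0$ for all $i$ and $k$; comparing top-degree parts (translation does not change the top-degree part of a polynomial) this is impossible unless $m_i(h_k)=m(h_k)$ for every $k$, and density of $H$ together with continuity then yields $m_i=m$. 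Hence $r=1$, $m_1=m$, and $f=p_1\cdot m$.

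The main obstacle is case (2): once a candidate exponential $m$ is extracted from $g$, one must rule out any further exponential that could appear in passing from the small subgroup $H$ to all of $G$. The device that makes this work is the finite-dimensionality of the variety generated by $f$ — which is exactly where the density of $\langle h_1,\dots,h_t\rangle$ and the continuity of translation are used — followed by Theorem \ref{me} and by the density/continuity argument that collapses the spectrum to $\{m\}$. Cases (1) and (3) are essentially specializations of, respectively, Theorem \ref{TLEM} (to a group generated by the given elements) and Theorem \ref{TLD}.
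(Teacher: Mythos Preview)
Your argument is correct. For cases (1) and (3) you do exactly what the paper does: factor out $m$ via the identity \eqref{difmod} and appeal to the polynomial statements (Theorem~\ref{TLP} and Theorem~\ref{cor_dist}); case (3) is, as you note, literally Theorem~\ref{TLD}.

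For case (2) you take a genuinely different route. The paper's proof is the single line ``Case~(2) follows from Theorem~\ref{TLEM}'', which really only yields that $f\widecheck{m}$ is a \emph{local} (equivalently, generalized) polynomial on $G$; passing from there to a polynomial in the strict sense on a general topological Abelian group is not automatic and is left implicit. You instead show that the relations $(\tau_{h_k}-m(h_k))^{n_k}f=0$ force the $H$-orbit of $f$ to span a finite-dimensional space, extend this to all translates by continuity of $y\mapsto\tau_yf$ and density of $H$, and then invoke Theorem~\ref{me} to get an honest exponential polynomial, whose spectrum you collapse to $\{m\}$ by the top-degree comparison. This is essentially the machinery the paper develops \emph{after} Corollary~\ref{coro1L} (Lemmas~\ref{NN}--\ref{dos1}, Theorem~\ref{coro2}, and the spectral argument in Theorem~\ref{montel_all_cases}), so your proof of case~(2) anticipates those later sections. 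The trade-off: the paper's one-line reduction keeps Corollary~\ref{coro1L} as a quick consequence of Section~2, while your argument is self-contained and delivers the sharp conclusion (``exponential monomial'' rather than ``local/generalized exponential monomial'') without relying on any unstated extension step.
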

 
 \begin{proof}
Case $(1)$ follows directly from Theorem \ref{TLEM}. Case $(2)$ follows from Theorem \ref{TLEM} and Case $(3)$ is Theorem \ref{TLD}.  
 \end{proof}
 
\section{Subspaces which are $\Delta_{m;y}^s$-invariant}

A weak point in Corollary \ref{coro1L} is that we have to assume that $m$ is an exponential. In the subsequent sections we shall weaken this hypothesis.

\begin{lemma}\label{NN} Given a vector space $V$ of complex valued functions on $G$,  and a function $\varphi:G\to\C$ the following statements are equivalent:
\begin{enumerate}
\item $V$ is translation invariant, that is $\tau_yf$ is in $V$ for each $y$ in $G$ and $f$ in $V$.
\item $V$ is difference invariant, that is $\Delta_yf$ is in $V$ for each $y$ in $G$ and $f$ in $V$.
\item $V$ is $\varphi$-modified difference invariant, that is $\Delta_{\varphi;y}f$ is in $V$  for each $y$ in $G$ and $f$ in $V$. 
\end{enumerate}

Consequently, if $y_1,\cdots,y_t$ generate $G$, and $\Delta_{\varphi;y_k}(V)\subseteq V$ for $k=1,\cdots,t$, then $V$ is translation invariant. Analogously, if $G$ is a topological Abelian group, $y_1,\cdots,y_t$ generate a dense subgroup of $G$ and $V\subseteq C(G,\mathbb{C})$, then $V$ is invariant by translations. In particular, if $V$ is finite dimensional, then and all its elements are exponential polynomials.
\end{lemma}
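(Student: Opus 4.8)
The plan is to reduce everything to one algebraic observation---that each of the operators $\Delta_y$, $\Delta_{\varphi;y}$ differs from $\tau_y$ by a scalar multiple of the identity---and then to propagate invariance from the generators to the whole group.

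First I would establish the equivalence of (1), (2), (3). Fix $y$ in $G$. Since $\Delta_y=\tau_y-\tau_0$ and $\Delta_{\varphi;y}=\tau_y-\varphi(y)\tau_0$, with $\tau_0$ the identity, and since $V$ is a linear subspace, the three assertions ``$\tau_y f\in V$'', ``$(\tau_y-\tau_0)f\in V$'' and ``$(\tau_y-\varphi(y)\tau_0)f\in V$'' are equivalent for every $f$ in $V$: any one of these elements is obtained from any other by adding or subtracting a scalar multiple of $f$, which already lies in $V$. Quantifying over all $y$ in $G$ yields $(1)\Leftrightarrow(2)\Leftrightarrow(3)$, and no finiteness or topology is needed here.

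Next comes the ``consequently'' part. From $\Delta_{\varphi;y_k}(V)\subseteq V$ and linearity, $\tau_{y_k}f=\Delta_{\varphi;y_k}f+\varphi(y_k)f\in V$ for every $f$ in $V$, so $\tau_{y_k}(V)\subseteq V$ for $k=1,\dots,t$. The decisive step is to promote these one-sided inclusions to equalities: when $V$ is finite dimensional, $\tau_{y_k}|_V$ is an injective endomorphism of $V$ (it is already injective on all of $\C^G$), hence surjective, so $\tau_{y_k}(V)=V$ and therefore also $\tau_{-y_k}(V)=V$. Since $y_1,\dots,y_t$ generate $G$, every $y$ in $G$ is a $\Z$-combination $\sum_k m_k y_k$, and $\tau_y=\prod_k\tau_{y_k}^{m_k}$ then maps $V$ onto $V$; hence $V$ is translation invariant. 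For the topological statement I would introduce $W=\{y\in G:\tau_y(V)\subseteq V\}$ and note that it is a submonoid of $G$ containing $y_1,\dots,y_t$; that it is closed, because a finite-dimensional $V$ is closed in $\mathcal C(G)$ and the orbit maps $y\mapsto\tau_y f$ with $f$ in $V$ are continuous from $G$ to $\mathcal C(G)$; and that finite dimensionality again gives $\pm y_k\in W$. Thus $W$ is a closed subgroup containing the dense subgroup generated by $y_1,\dots,y_t$, so $W=G$.

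The last assertion then follows by combining these facts: $V$ is translation invariant and finite dimensional, so Theorem~\ref{me} applies (with the discrete topology when $G$ carries no topology) and every element of $V$ is an exponential polynomial. The main obstacle is precisely the step singled out above---passing from $\tau_{y_k}(V)\subseteq V$ to invariance under the inverse translations $\tau_{-y_k}$, and hence under all of $G$. For a general infinite-dimensional $V$ this need not hold, so the argument genuinely rests on finite dimensionality (injective $\Rightarrow$ surjective on $V$), or, in the topological setting, on closedness of $V$ together with continuity of the orbit maps; everything else is routine linear bookkeeping.
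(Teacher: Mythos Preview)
Your proof of the equivalence $(1)\Leftrightarrow(2)\Leftrightarrow(3)$ is exactly the paper's argument: each of $\Delta_y f$ and $\Delta_{\varphi;y}f$ differs from $\tau_y f$ by a scalar multiple of $f\in V$, so membership in the subspace $V$ is the same condition in all three cases.

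Where you go beyond the paper is in the ``Consequently'' clause. The paper gives no argument for it at all; you supply one, and in doing so you correctly observe that from $\tau_{y_k}(V)\subseteq V$ one cannot in general conclude $\tau_{-y_k}(V)\subseteq V$---so invariance under the full group generated by $y_1,\dots,y_t$ does not follow without an extra hypothesis. Your fix, using finite dimensionality to turn the injective endomorphism $\tau_{y_k}|_V$ into a bijection, is the right one, and your counterexample remark (that this step fails for general infinite-dimensional $V$) is apt: for instance, on $G=\Z$ with generator $1$, the space of functions vanishing on the positive integers is $\tau_1$-invariant but not $\tau_{-1}$-invariant. The lemma as stated in the paper is therefore slightly imprecise in that clause; since every application in the paper (notably Theorem~\ref{coro2}) has $V$ finite dimensional, your reading is the intended one. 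Your handling of the topological case and the final appeal to Theorem~\ref{me} are also fine.
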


\begin{proof}
Indeed, if $f$ belongs to $V$, then the function $g$ defined by 
$$
g(x)=\Delta_{\varphi;y}f(x)=f(x+y)-\varphi(y)f(x)
$$ 
belongs to $V$ if and only if $h(x)=\tau_{y}f(x)=f(x+y)$ is in $V$, since $V$ is a vector space.
\end{proof}

Obviously, Lemma \ref{NN} has an analogous version for the distributional setting.

\begin{lemma}\label{nuevonuevo_lem} Given a vector space $V$ of complex valued distributions on $\R^d$, and a function $\varphi:G\to\C$  the following statements are equivalent:
\begin{enumerate}
\item $V$ is translation invariant, that is $\tau_yu$ is in $V$ for each $y$ in $G$ and $u$ in $V$.
\item $V$ is difference invariant, that is $\Delta_yu$ is in $V$ for each $y$ in $G$ and $u$ in $V$.
\item $V$ is $\varphi$-modified difference invariant, that is $\Delta_{\varphi;y}u$ is in $V$  for each $y$ in $G$ and $u$ in $V$.  
\end{enumerate}
Consequently, if $y_1,\cdots,y_t$ generate $\mathbb{R}^d$ and $\Delta_{\varphi;y_k}(V)\subseteq V$ for $k=1,\cdots,t$, then $V$ is translation invariant. In particular, if $V$ is finite dimensional, then all its elements are exponential polynomials. 
\end{lemma}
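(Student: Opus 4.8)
The plan is to prove Lemma~\ref{nuevonuevo_lem} by transcribing the proof of Lemma~\ref{NN} almost verbatim, since the equivalence of (1)--(3) is purely linear-algebraic and does not use that the members of $V$ are functions. For a fixed $y\in G$ and a fixed $u\in V$ one has
\[
\Delta_{\varphi;y}u=\tau_yu-\varphi(y)\,\tau_0u=\tau_yu-\varphi(y)\,u ,
\]
and the right-hand side differs from $\tau_yu$ by a scalar multiple of $u\in V$; since $V$ is a linear space this gives $\Delta_{\varphi;y}u\in V$ if and only if $\tau_yu\in V$, and the special case $\varphi\equiv 1$ gives the same for $\Delta_yu$. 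Hence (1)$\iff$(2)$\iff$(3). The only point to check in the distributional setting is that $\tau_y$, and therefore $\Delta_y$ and $\Delta_{\varphi;y}$, are well-defined linear operators on $\mathcal D'(\R^d)$; this is exactly what is recorded in the paragraph preceding Corollary~\ref{coro1L}, so the argument carries over word for word.

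For the ``consequently'' part I would argue as follows. If $\Delta_{\varphi;y_k}(V)\subseteq V$ for $k=1,\dots,t$, then the equivalence just proved (applied with $y=y_k$) yields $\tau_{y_k}(V)\subseteq V$ for each $k$. Each $\tau_{y_k}$ is a bijection of $\mathcal D'(\R^d)$ with inverse $\tau_{-y_k}$, so when $V$ is finite dimensional the restriction $\tau_{y_k}|_V\colon V\to V$ is an injective endomorphism of a finite dimensional space, hence an automorphism, and therefore $\tau_{-y_k}(V)=(\tau_{y_k}|_V)^{-1}(V)=V\subseteq V$ as well. Consequently $V$ is invariant under $\tau_y$ for every $y$ in the subgroup $\Gamma$ generated by $y_1,\dots,y_t$, which by hypothesis is dense in $\R^d$. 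It remains to pass from $\Gamma$ to all of $\R^d$: for each $u\in V$ the map $y\mapsto\tau_yu$ is continuous from $\R^d$ into $\mathcal D'(\R^d)$, it sends $\Gamma$ into $V$, and $V$ — being finite dimensional — is closed in $\mathcal D'(\R^d)$; hence $\tau_yu\in\overline V=V$ for every $y\in\overline\Gamma=\R^d$, i.e. $V$ is translation invariant.

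Finally, to obtain the last assertion I would invoke the distributional analogue on $\R^d$ of Theorem~\ref{me} (equivalently, of the Anselone--Korevaar Theorem~\ref{AnsKor}): a finite dimensional translation invariant space of distributions on $\R^d$ consists of exponential polynomials. A self-contained route is to convolve every $u\in V$ with a mollifier $\phi_\varepsilon$; the space $V*\phi_\varepsilon$ is again finite dimensional and translation invariant but now consists of smooth functions, hence of exponential polynomials by Theorem~\ref{me}, and letting $\varepsilon\to 0$ identifies $u$ (in the distributional sense) with an exponential polynomial.

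I expect the only genuine work to be in the ``consequently'' part, namely the bookkeeping that upgrades invariance under the finitely many translations $\tau_{y_k}$ to full translation invariance: first to invariance under the dense subgroup $\Gamma$, which is where finite dimensionality is really used (to invert $\tau_{y_k}$ on $V$, a subtlety that is silent in Lemma~\ref{NN}), and then to invariance under all of $\R^d$, which uses the continuity of translation on $\mathcal D'(\R^d)$ together with the closedness of finite dimensional subspaces. The equivalence of (1)--(3) is immediate, and the final step is a citation or a routine mollification argument.
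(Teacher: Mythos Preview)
Your proof is correct and matches the paper's approach, which reads in full: ``For the first part of this result the proof of Lemma~\ref{NN} applies. The last statement is Anselone--Korevaar's Theorem~\ref{AnsKor}.'' You go beyond the paper by actually supplying the argument for the ``consequently'' clause---inverting $\tau_{y_k}$ on $V$ via finite dimensionality, extending to the generated dense subgroup, then closing up by continuity of $y\mapsto\tau_y u$ and closedness of finite-dimensional subspaces---and this care is warranted: without the finite-dimensional hypothesis that clause as literally stated is false (on $\R$, take $V$ the distributions supported in $[0,\infty)$ with $y_1=-1$, $y_2=-\sqrt{2}$).

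One small point on your mollification sketch: to make the passage $\varepsilon\to 0$ rigorous you need that all the spaces $V*\phi_\varepsilon$ sit inside a \emph{single} finite-dimensional (hence closed) subspace of $\mathcal D'(\R^d)$, so that the limit stays there. This follows once you write $\tau_{-x}|_V$ in a fixed basis $u_1,\dots,u_n$ of $V$: then $(u_j*\phi)(x)=(\tau_{-x}u_j)(\check\phi)$ is a linear combination of the matrix coefficients $a_{jk}(x)$ with scalars depending on $\phi$, so every $V*\phi$ lies in the fixed span of the $a_{jk}$.
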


\begin{proof}
For the first part of this result the proof of Lemma \ref{NN} applies. The last statement is Anselone-Koreevar's Theorem \ref{AnsKor}.
\end{proof}

Given a vector space $E$, a subset $V\subseteq E$, a linear operator $L:E\to E$, and a natural number $n$ we introduce the notation
\begin{equation*}
V_L^{[n]}=V+L(V)+\dots +L^n(V)\,.
\end{equation*}
As $L^0$ is the identity operator, we have $V_L^{[0]}=V$.
\vskip.3cm  

Using a slightly different notation the following  technical result has been proved in \cite[Lemma 2.1]{AK_CJM}:

\begin{lemma} \label{uno} Let $E$ be a vector space, $L:E\to E$ a linear operator, and let $n$ be a positive integer. If $V$ is an $L^n$-invariant subspace of $E$, then the linear space $V_L^{[n]}$
is $L$-invariant. Furthermore, $V_L^{[n]}$ is the smallest $L$-invariant subspace of $E$ containing $V$.
\end{lemma}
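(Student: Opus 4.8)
The plan is to verify the two assertions of the lemma separately: the $L$-invariance of $V_L^{[n]}$ by a direct computation that uses the $L^n$-invariance of $V$ in exactly one place, and the minimality by a routine induction argument.

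First I would check that $V_L^{[n]}$ is $L$-invariant. Applying $L$ to the defining sum yields
\[
L\bigl(V_L^{[n]}\bigr)=L(V)+L^2(V)+\cdots+L^{n}(V)+L^{n+1}(V)\,.
\]
The summands $L(V),L^2(V),\dots,L^{n}(V)$ are among the defining summands of $V_L^{[n]}$, hence contained in it. For the remaining summand I would invoke the hypothesis $L^{n}(V)\subseteq V$: it gives $L^{n+1}(V)=L\bigl(L^{n}(V)\bigr)\subseteq L(V)\subseteq V_L^{[n]}$. Therefore $L\bigl(V_L^{[n]}\bigr)\subseteq V_L^{[n]}$. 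Since $V_L^{[n]}$ is by definition a sum of subspaces it is itself a subspace, and it contains $V=V_L^{[0]}$, so it is an $L$-invariant subspace containing $V$.

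Next I would establish minimality. Let $W$ be any $L$-invariant subspace of $E$ with $V\subseteq W$. An induction on $k$ shows $L^{k}(V)\subseteq W$ for every natural number $k$: the case $k=0$ is the hypothesis $V\subseteq W$, and if $L^{k}(V)\subseteq W$, then $L^{k+1}(V)=L\bigl(L^{k}(V)\bigr)\subseteq L(W)\subseteq W$ by $L$-invariance of $W$. In particular $L^{k}(V)\subseteq W$ for $k=0,1,\dots,n$, whence $V_L^{[n]}=V+L(V)+\cdots+L^{n}(V)\subseteq W$. Combined with the first part, this exhibits $V_L^{[n]}$ as an $L$-invariant subspace containing $V$ which is contained in every other such subspace, i.e. the smallest one.

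I do not anticipate a genuine obstacle: the argument is elementary. The only step that is not purely formal is $L^{n+1}(V)\subseteq L(V)$, which is precisely where the $L^{n}$-invariance of $V$ is used and without which the conclusion fails; I would make sure the index bookkeeping there is stated cleanly.
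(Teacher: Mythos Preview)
Your proof is correct and follows essentially the same approach as the paper's. The only cosmetic difference is that the paper argues elementwise---writing $v=v_0+Lv_1+\cdots+L^{n}v_n$, replacing $L^{n}v_n$ by some $u\in V$, and then checking that $Lv=L(v_0+u)+L^{2}v_1+\cdots+L^{n}v_{n-1}$ lies in $V_L^{[n]}$---whereas you carry out the same computation at the level of subspaces; the minimality arguments are identical.
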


\begin{proof}
Let $v$ be in $V_L^{[n]}$, then 
\begin{equation}\label{Box}
v=v_0+Lv_1+\dots+L^{n-1}v_{n-1}+L^nv_n
\end{equation}
with some elements $v_0,v_1,\dots,v_n$ in $V$. By the $L^n$-invariance of $V$, we have that $L^nv_n=u$ is in $V$, hence it follows
\begin{equation*}
Lv=L(v_0+u)+L^2v_1+\dots+L^{n}v_{n-1}\,,
\end{equation*}
and the right hand side is clearly in $V_L^{[n]}$. This proves that $V_L^{[n]}$ is $L$-invariant. On the other hand, if $W$ is an $L$-invariant subspace of $E$, which contains $V$, then $L^k(V)\subseteq W$ for $k=1,2,\dots,n$, hence the right hand side of \eqref{Box} is in $W$.
\end{proof}

Now we can prove the following result, which generalizes  \cite[Lemma 2.2]{AK_CJM}:

\begin{lemma}\label{dos1}
Let $t$ be a positive integer, $E$ a vector space, $L_1,L_2,\cdots,L_t:E\to E$ pairwise commuting linear operators, and let $s_1,\cdots, s_t$ be natural numbers. Given a subspace $V\subseteq E$ we form the sequence of subspaces
\begin{equation}\label{rec}
V_0=V,\enskip V_{i}=(V_{i-1})_{L_{i}}^{[s_{i}]},\enskip i=1,2,\dots,t\,.
\end{equation}
If  for $i=1,2,\dots,t$ the subspace $V$ is $L_i^{s_i}$-invariant, then $V_t$ is $L_i$-invariant, and it contains $V$. Furthermore, $V_t$ is the smallest subspace of $E$ containing $V$, which is $L_i$-invariant for $i=1,2,\dots,t$.
\end{lemma}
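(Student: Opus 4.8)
The plan is to argue by induction on $t$, taking Lemma~\ref{uno} both as the base case $t=1$ (where the statement is literally Lemma~\ref{uno} applied to $(V,L_1)$ with $n=s_1$) and as the engine of the inductive step. The inductive hypothesis is that the assertion holds for any $t-1$ pairwise commuting operators: if $V$ is $L_i^{s_i}$-invariant for $i=1,\dots,t-1$, then $V_{t-1}$ is $L_i$-invariant for those $i$, contains $V$, and is the smallest subspace of $E$ with these two properties.

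First I would record the explicit ``monomial'' description of the intermediate spaces. Unwinding the recursion \eqref{rec} step by step, one checks by a routine induction (using only linearity) that
\[
V_{t-1}=\sum_{0\le a_i\le s_i,\ i<t} L_1^{a_1}\cdots L_{t-1}^{a_{t-1}}(V)\,.
\]
Two things follow from this at once. On the one hand, since $L_t^{s_t}$ commutes with each $L_i$ for $i<t$ and $L_t^{s_t}(V)\subseteq V$ by hypothesis, applying $L_t^{s_t}$ to a typical summand $L_1^{a_1}\cdots L_{t-1}^{a_{t-1}}v$ and sliding it to the right past the other factors yields $L_1^{a_1}\cdots L_{t-1}^{a_{t-1}}(L_t^{s_t}v)\in V_{t-1}$; hence $V_{t-1}$ is $L_t^{s_t}$-invariant, and Lemma~\ref{uno} applied to the pair $(V_{t-1},L_t)$ shows that $V_t=(V_{t-1})_{L_t}^{[s_t]}$ is $L_t$-invariant and is the smallest $L_t$-invariant subspace containing $V_{t-1}$. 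On the other hand, writing $V_t=\sum_{a=0}^{s_t}L_t^{a}(V_{t-1})$ and using that each $L_i$ ($i<t$) commutes with $L_t$ and leaves $V_{t-1}$ invariant (inductive hypothesis), we get $L_i(V_t)\subseteq V_t$ for $i<t$ as well. Thus $V_t$ is $L_i$-invariant for every $i=1,\dots,t$, and it contains $V$ because $V\subseteq V_{t-1}\subseteq V_t$.

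For the minimality claim, let $W$ be any subspace of $E$ with $V\subseteq W$ that is $L_i$-invariant for $i=1,\dots,t$. Applying the inductive hypothesis to $L_1,\dots,L_{t-1}$ gives $V_{t-1}\subseteq W$, and then the minimality part of Lemma~\ref{uno} applied to $(V_{t-1},L_t)$ gives $V_t\subseteq W$. This closes the induction.

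The only point I expect to require genuine care is the transfer of the $L_t^{s_t}$-invariance of $V$ to $V_{t-1}$: this is exactly where the pairwise commutativity of the operators is used, and it is what makes the monomial description of $V_{t-1}$ the right tool. Once that observation is in place, the rest is straightforward book-keeping of invariance relations that the induction hypothesis and Lemma~\ref{uno} already supply.
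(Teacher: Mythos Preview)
Your proof is correct and follows essentially the same strategy as the paper: show that $V_{t-1}$ is $L_t^{s_t}$-invariant, apply Lemma~\ref{uno} to obtain $L_t$-invariance of $V_t$, then use commutativity together with the $L_i$-invariance of $V_{t-1}$ (for $i<t$) to push the remaining invariances up to $V_t$; minimality is handled the same way.

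The only difference is organizational. The paper runs two separate inductions: first it proves the stronger intermediate fact that \emph{every} $V_i$ is $L_j^{s_j}$-invariant for \emph{every} $j$, and then, in a second pass, that $V_{t-1}$ is $L_i$-invariant for $i<t$. You package everything into a single induction on $t$, invoking the $(t-1)$-operator case to supply both the $L_i$-invariance of $V_{t-1}$ and the minimality of $V_{t-1}$, and using the explicit monomial description of $V_{t-1}$ to get $L_t^{s_t}$-invariance directly. Your route is a bit more economical, since you only verify the one $L_t^{s_t}$-invariance you actually need rather than the full matrix of $L_j^{s_j}$-invariances. One tiny quibble: the monomial description arises from the recursion with the operators in the order $L_{t-1}^{a_{t-1}}\cdots L_1^{a_1}$, not $L_1^{a_1}\cdots L_{t-1}^{a_{t-1}}$, so strictly speaking you are already using commutativity there, not ``only linearity''; but this does not affect anything.
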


\begin{proof}
First we prove by induction on $i$ that $V_{i}$ is $L_{j}^{s_{j}}$-invariant and it contains $V$ for each $i=0,1,\dots,t$ and $j=1,2,\dots,t$. For $i=0$ we have $V_0=V$, which is  $L_j^{s_j}$-invariant for $j=1,2,\dots,t$, by assumption. 
\vskip.3cm

Suppose that $i\geq 1$, and we have proved the statement for $V_{i-1}$. Now we prove it for $V_{i}$. If $v$ is in $V_{i}$, then we have
\begin{equation*}
v=u_0+L_{i}u_1+\dots+L_{i}^{s_{i}}u_{s_i}\,,
\end{equation*}
where $u_j$ is in $V_{i-1}$ for $j=0,1,\dots,s_{i}$. It follows for $j=1,2,\dots,t$
\begin{equation*}
L_{j}^{s_{j}}v=(L_{j}^{s_{j}}u_0)+L_{i}(L_{j}^{s_{j}}u_1)+\dots+L_{i}^{s_{i}}(L_{j}^{s_{j}}u_{s_i})\,.
\end{equation*}
Here we used the commuting property of the given operators, which obviously holds for their powers, too. By the induction hypothesis, the elements in the brackets on the right hand side belong to $V_{i-1}$, hence $L_{j}^{s_{j}}v$ is in $V_i$, that is, $V_i$ is $L_{j}^{s_{j}}$-invariant. As $V_i$ includes $V_{i-1}$, we also conclude that $V$ is in $V_i$, and our statement is proved.
\vskip.3cm

Now we have
\begin{equation*}
V_t=V_{t-1}+L_t(V_{t-1})+\dots +L_t^{s_t}(V_{t-1})\,,
\end{equation*}
and we apply the previous lemma: as $V_{t-1}$ is $L_t^{s_t}$-invariant, we have that $V_t$ is $L_t$-invariant. 
\vskip.3cm

Let us now prove the invariance of $V_t$
under the operators $L_j$ ($j<t$). Since $V_1$
is clearly $L_1$-invariant, by Lemma 12, an induction process 
gives that $V_{t-1}$ is $L_i$-invariant for $1\leq i\leq t-1$. Thus, if we
take $1\leq i\leq t-1$, then we can use that $L_iL_t=L_tL_i$  and $L_i(V_{t-1})$ is a subset
of $V_{t-1}$ to conclude that
\begin{eqnarray*}
L_i(V_t)
&=& L_i(V_{t-1})+L_t(L_i(V_{t-1}))+\cdots +L_t^{s_t}(L_i(V_{t-1})) \\
&\subseteq & V_{t-1}+L_t(V_{t-1})+\cdots+L_t^{s_t}(V_{t-1}) =V_t\,,
\end{eqnarray*}
which completes this part of the proof. 
\vskip.3cm

Suppose that $W$ is a subspace in $E$ such that $V\subseteq W$, and $W$ is $L_j$-invariant for $j=1,2,\dots,t$. Then, obviously, all the subspaces $V_i$ for $i=1,2,\dots,t$ are included in $W$. In particular, $V_t$ is included in $W$. This proves that $V_t$ is the smallest subspace in $E$, which includes $V$, and which is invariant with respect to the family of operators $L_i$. Consequently, it follows that $V_t$ is uniquely determined by $V$, and by the family of the operators $L_i$, no matter how we label these operators. 
\end{proof}

 \begin{theorem} \label{coro2}
We suppose that $t$ is a positive integer, and either of the following possibilities holds:
\begin{enumerate}
\item $G$ is a finitely generated Abelian group with generators $h_1,\dots,h_t$, and $V$ is a finite dimensional vector space of complex valued functions on $G$.
\item $G$ is a topological Abelian group, in which the elements $h_1,\dots,h_t$ generate a dense subgroup, and $V$ is a finite dimensional vector space of continuous complex valued functions on $G$.
\item $G=\mathbb{R}^d$,  the elements $h_1,\dots,h_t$ generate a dense subgroup in $G$, and $V$ is a finite dimensional vector space of complex valued distributions on $\R^d$.
\end{enumerate}
If there are natural numbers $n_1,n_2,\dots,n_t$, and there is a function $\varphi:G\to\C$ such that $\Delta_{\varphi;h_k}^{n_k+1}(V)\subseteq V$ holds for $k=1,2,\dots,t$, then 
$V$ is included in a finite dimensional translation invariant linear space. In particular, $V$ consists of exponential polynomials.
\end{theorem}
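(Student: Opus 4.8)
The plan is to assemble the statement from Lemmas \ref{dos1}, \ref{NN} and \ref{nuevonuevo_lem} together with Theorems \ref{me} and \ref{AnsKor}. First I would fix the ambient vector space $E$: all complex valued functions on $G$ in case (1), continuous complex valued functions on $G$ in case (2), and complex valued distributions on $\mathbb{R}^d$ in case (3). On $E$ consider the operators $L_k=\Delta_{\varphi;h_k}$ for $k=1,2,\dots,t$. Since $\Delta_{\varphi;y}=\tau_y-\varphi(y)\tau_0$ and the translation operators commute, the $L_k$ pairwise commute, and so do their powers; moreover each $L_k$ maps $E$ into $E$ (translations preserve continuity, respectively send distributions to distributions). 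The hypothesis $\Delta_{\varphi;h_k}^{n_k+1}(V)\subseteq V$ says precisely that the finite dimensional subspace $V$ is $L_k^{s_k}$-invariant with $s_k=n_k+1$.

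Next I would apply Lemma \ref{dos1} to this commuting family with these exponents, obtaining the chain $V_0=V$, $V_i=(V_{i-1})_{L_i}^{[s_i]}$, $i=1,\dots,t$. By that lemma $V_t$ contains $V$ and is $L_k$-invariant, i.e.\ $\Delta_{\varphi;h_k}(V_t)\subseteq V_t$, for every $k=1,\dots,t$. The extra point, which is not in the statement of Lemma \ref{dos1} but is immediate from its construction, is that $V_t$ is again finite dimensional: each step passes from $V_{i-1}$ to the finite sum $V_{i-1}+L_i(V_{i-1})+\dots+L_i^{s_i}(V_{i-1})$ of images of a finite dimensional space under linear maps, so finite dimensionality is preserved, and in fact $\dim V_t\le \dim V\cdot\prod_{i=1}^{t}(s_i+1)$. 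In cases (2) and (3) one also notes that $V_t\subseteq E$, since each $L_k$ preserves $E$.

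Finally I would feed $V_t$ into Lemma \ref{NN} in cases (1) and (2), and into Lemma \ref{nuevonuevo_lem} in case (3). Because $h_1,\dots,h_t$ generate $G$ in case (1) and a dense subgroup of $G$ in cases (2) and (3), and $\Delta_{\varphi;h_k}(V_t)\subseteq V_t$ for all $k$, those lemmas give that the finite dimensional space $V_t$ is translation invariant; being finite dimensional it then consists of exponential polynomials, which is the concluding assertion of each lemma, resting on Theorem \ref{me} in cases (1) and (2) and on the Anselone--Korevaar Theorem \ref{AnsKor} in case (3). Since $V\subseteq V_t$ and $V_t$ is a finite dimensional translation invariant space, the theorem follows.

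I do not expect a genuine obstacle here: the argument is a routine combination of the earlier results. The one place asking for a little attention is the observation that $V_t$ stays finite dimensional, which has to be read off the inductive construction in Lemma \ref{dos1} rather than from its statement; a secondary small point is that in case (3) the passage from density of the generated subgroup to full translation invariance uses the continuity of $y\mapsto\tau_y u$ on distributions, but that is already built into Lemma \ref{nuevonuevo_lem}.
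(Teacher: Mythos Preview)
Your proposal is correct and follows essentially the same route as the paper's own proof: apply Lemma \ref{dos1} with $L_i=\Delta_{\varphi;h_i}$ to produce a finite dimensional space $W=V_t\supseteq V$ that is $\Delta_{\varphi;h_i}$-invariant for each $i$, then invoke Lemmas \ref{NN}, \ref{nuevonuevo_lem} and the structure theorems for finite dimensional translation invariant spaces. Your extra remarks on the commutativity of the $L_k$ and on why $V_t$ remains finite dimensional make explicit what the paper's proof simply asserts, and the paper additionally cites Lefranc's Theorem \ref{Lefr} for case (1), though Theorem \ref{me} already covers it.
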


\begin{proof} We apply Lemma \ref{dos1} with $L_i=\Delta_{\varphi;h_i}:E\to E$, $i=1,\cdots,t$, to conclude that, with the notation $W=V_t$, we have
$V\subseteq W$, and $W$ is a finite dimensional subspace satisfying $\Delta_{\varphi;h_i}(W)\subseteq W$, $i=1,2,\cdots,t$. The hypotheses on $\{h_1,\cdots,h_t\}$ and Lemmas \ref{NN}, \ref{nuevonuevo_lem} imply that $W$ is translation invariant. Theorems \ref{Lefr}, \ref{me}, and \ref{AnsKor} guarantee that $W$ consists of exponential polynomials. In particular, $V$ consists of exponential polynomials, too. 
\end{proof}

 

 \section{A Monte--type theorem for exponential monomials}

  \begin{theorem} \label{montel_all_cases}
  We suppose that $t$ is a positive integer, and either of the following possibilities holds:
\begin{enumerate}
\item $G$ is a finitely generated Abelian group with generators $h_1,\dots,h_t$, and \hbox{$f:G\to\C$} is a nonzero function.
\item $G$ is a topological Abelian group, in which the elements $h_1,\dots,h_t$ generate a dense subgroup, and $f:G\to\C$ is a nonzero continuous function.
\item $G=\mathbb{R}^d$,  the elements $h_1,\dots,h_t$ generate a dense subgroup in $G$, and $f$ is a nonzero complex valued distribution on $\R^d$.
\end{enumerate}
If there are natural numbers $n_1,n_2,\dots,n_t$, and there is a function $\varphi:G\to\C$ such that
 \begin{equation*}
 \Delta_{\varphi;h_k}^{n_k+1}f=0,\enskip k=1,\dots,t,
 \end{equation*}
 then  $f$ is an exponential monomial.  Furthermore, if $e:G\to \mathbb{C}$ is the exponential function associated to $f$, then $m(h_i)=e(h_i)$, $i=1,\cdots,t$.
  Consequently, if  $\Delta_{\varphi;y}^{n+1}f(x)=0$ for all $x,y$ in $G$, then  $\varphi=e$.  
  \end{theorem}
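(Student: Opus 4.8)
The plan is to bootstrap from Theorem~\ref{coro2} and then read off the finer information from the spectral behaviour of the modified difference operators. First I would apply Theorem~\ref{coro2} to the one–dimensional space $V=\C f$: the hypotheses $\Delta_{\varphi;h_k}^{n_k+1}f=0$ give $\Delta_{\varphi;h_k}^{n_k+1}(V)\subseteq V$ for $k=1,\dots,t$, so by that theorem $V$ — and in particular $f$ — consists of exponential polynomials (in case~(3) this means $f$ agrees in the distributional sense with an exponential polynomial function, with which we henceforth work). Write $f=\sum_{j=1}^{r}p_j\,m_j$ with pairwise distinct exponentials $m_j$ and nonzero polynomials $p_j$; since only finitely many additive functions occur, each $p_j$ lies in a fixed finite–dimensional space of polynomials of bounded degree. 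It remains to prove $r=1$ and that the surviving exponential agrees with $\varphi$ on $h_1,\dots,h_t$.

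The key point is that $\Delta_{\varphi;h_k}$ does not mix the exponential components. A one–line computation gives
\[
\Delta_{\varphi;h_k}(p\,m)=\bigl(m(h_k)\,\tau_{h_k}p-\varphi(h_k)\,p\bigr)\cdot m
\]
for every exponential $m$ and polynomial $p$, hence $\Delta_{\varphi;h_k}^{n_k+1}f=\sum_{j}\bigl(S_{j,k}^{\,n_k+1}p_j\bigr)m_j$, where $S_{j,k}$ is the operator $p\mapsto m_j(h_k)\,\tau_{h_k}p-\varphi(h_k)\,p$ on polynomials. Because exponential monomials attached to distinct exponentials are linearly independent — precisely, $\sum_j q_j m_j=0$ with distinct $m_j$ forces every polynomial $q_j$ to vanish — the hypothesis yields $S_{j,k}^{\,n_k+1}p_j=0$ for all $j,k$. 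On the finite–dimensional, $S_{j,k}$–invariant space of polynomials of degree at most $\deg p_j$ one has $S_{j,k}=\bigl(m_j(h_k)-\varphi(h_k)\bigr)\mathrm{id}+m_j(h_k)\,\Delta_{h_k}$, where $\Delta_{h_k}$ is nilpotent; a scalar multiple of the identity plus a nilpotent operator is invertible exactly when the scalar is nonzero, so $S_{j,k}$ is injective as soon as $m_j(h_k)\neq\varphi(h_k)$. As $p_j\neq 0$, we must therefore have $m_j(h_k)=\varphi(h_k)$ for every $j$ and every $k$.

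Thus each $m_j$ agrees with $\varphi$ on $h_1,\dots,h_t$. In case~(1) these elements generate $G$, and in cases~(2) and~(3) they generate a dense subgroup while the $m_j$ are continuous; in all three situations a homomorphism into the nonzero complex numbers is determined by its values on a (dense) generating set, so $m_1=\dots=m_r$, and since the $m_j$ are distinct this forces $r=1$. Hence $f=p_1 m_1$ is an exponential monomial, and its associated exponential $e=m_1$ satisfies $e(h_i)=\varphi(h_i)$ for $i=1,\dots,t$. For the last assertion, assume $\Delta_{\varphi;y}^{n+1}f=0$ for all $x,y\in G$ and let $g\in G$ be arbitrary. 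The enlarged family $h_1,\dots,h_t,g$ still generates $G$ (resp.\ a dense subgroup of $G$, resp.\ a dense subgroup of $\R^d$), and $f$ satisfies $\Delta_{\varphi;h_i}^{n+1}f=0$ for $i=1,\dots,t$ as well as $\Delta_{\varphi;g}^{n+1}f=0$; applying what has just been proved to this family gives $e(g)=\varphi(g)$. Since $g$ was arbitrary, $\varphi=e$.

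I expect the only real work to be the bookkeeping in the second paragraph: verifying that $\Delta_{\varphi;h_k}$ preserves each exponential component and, on that component, has exactly the scalar–plus–nilpotent form claimed. This uses the uniqueness of the representation $f=\sum_j p_j m_j$ (i.e.\ linear independence of exponential monomials with distinct exponentials), together with the observation that only finitely many additive functions intervene, so that $\Delta_{h_k}$ really is nilpotent on the polynomial space in play. Everything else is either quoted (Theorem~\ref{coro2}) or the elementary linear–algebra fact about $c\,\mathrm{id}+(\text{nilpotent})$; in case~(3) one uses, exactly as around Theorem~\ref{TLD} and Corollary~\ref{coro1L}, that a distributional exponential polynomial coincides almost everywhere with an ordinary one, after which the computation is literally the same.
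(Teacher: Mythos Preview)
Your proof is correct and follows essentially the same line as the paper's: invoke Theorem~\ref{coro2} to obtain that $f$ is an exponential polynomial, observe that $\Delta_{\varphi;h_k}$ preserves each exponential component, argue that its restriction to the $m_j$-component is invertible whenever $\varphi(h_k)\neq m_j(h_k)$, conclude that all $m_j$ agree on the generators (hence coincide), and finish the final clause by adjoining an arbitrary $y$ to the generating set. The only cosmetic difference is that the paper computes the matrix of $\Delta_{\varphi;h}$ on each $E_j$ in a graded-lex ordered basis $\{a^\alpha e_j\}$ and observes it is upper triangular with constant diagonal $e_j(h)-\varphi(h)$, whereas you phrase the same fact as $S_{j,k}=(m_j(h_k)-\varphi(h_k))\,\mathrm{id}+m_j(h_k)\Delta_{h_k}$ with $\Delta_{h_k}$ nilpotent; these are two ways of saying the same thing.
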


 \begin{proof} Let us prove $(1)$.  The other cases  are direct consequences of this one. 
It follows from Theorem \ref{coro2} that $f$ is an exponential polynomial:
 \begin{equation} \label{efe}
 f(x)=\sum_{i=1}^sp_i(x) e_i(x)\,,
 \end{equation}
 where $e_1,\cdots,e_s:G\to\mathbb{C}$ are exponentials  with $e_i\neq e_j$ whenever $i\neq j$, further $p_1,\cdots,p_s:G\to \mathbb{C}$ are polynomials, of degrees $k_1,k_2,\cdots,k_s$, respectively.   We can assume, with no loss of generality,  that $k_i\geq s_i-1$ for all $i$ and 
 \[
 p_i(x)=\sum_{|\alpha|\leq k_{i}}c_{i,\alpha} a(x)^{\alpha}
 \]
 where $a(x)=(a_1(x),a_2(x),\cdots,a_d(x))$, and the functions $a_1,\cdots,a_d:G\to \mathbb{C}$ are additive and linearly independent, further $c_{i,\alpha}$ is a complex number for each multi-index $\alpha\in\mathbb{N}^d$. 
Under such conditions it is known (see \cite{MR1113488}, Lemma 4.8, p.~44) that the functions in the set
 \begin{equation} \label{base}
 B = \{
  a^{\alpha}e_k, \ \  0\leq |\alpha|\leq k_i \text{ and } i=1,2,\cdots,s\} 
 \end{equation}
 are linearly independent, hence they form a basis of the generated vector space, which we denote by  $\mathcal{E}=\span B$.  Furthermore, $f$ is in $\mathcal{E}$, by construction.  
 \vskip.3cm
 
We consider the linear map $\Delta_{\varphi;h}:\mathcal{E}\to\mathcal{E}$ induced by the operator $\Delta_{\varphi;h}$, when restricted to $\mathcal{E}$.  Obviously, $\mathcal{E}= E_1\oplus E_2\oplus \cdots\oplus E_s$, where 
 \[
 E_j=\span\{a^{\alpha}e_i\} _{0\leq |\alpha|\leq k_i},  \  j=1,2,\cdots,s. 
 \]
 Furthermore,  $\Delta_{\varphi;h}(E_j)\subseteq E_j$ for $j=1,2,\cdots,s$, since
 \[
 q_{\alpha,h}(x)=\Delta_ha(x)^{\alpha}=(a(x)+a(h))^\alpha-a(x)^\alpha
 \]
 is a polynomial of degree at most $|\alpha|-1$, and 
 \begin{eqnarray*}
 \Delta_{\varphi;h}(a(x)^{\alpha}e_k(x)) &=& ((a(x)+a(h))^\alpha e_k(h)-m(h)a(x)^\alpha)e_k(x) \\
 &=& (a(x)^\alpha( e_k(h)-m(h))+q_{\alpha,h}(x)e_k(h))e_k(x).
 \end{eqnarray*}
 It follows that for each $h$ in $G$ and  $p\geq 1$ the operator $\Delta_{\varphi;h}^p$ also satisfies the relation $\Delta_{\varphi;h}^p(E_j)\subseteq E_j$ whenever $j=1,2,\cdots,s$, hence if $g$ is in $\mathcal{E}$, then 
 $\Delta_{\varphi;h}^p(g)=0$ if and only if  $\Delta_{\varphi,h}^pb_j=0$, where $g=b_1+\cdots+b_s$, with  $b_j$ in $E_j$ for $j=1,2,\cdots,s$. 
 \vskip.3cm
 
 Let $j$ be in $\{1,\cdots,s\}$, and we consider the restriction of the operator $\Delta_{\varphi;h}$ to $E_j$, denoting it by the same symbol. We order the basis $B_j= \{a^{\alpha}e_j\} _{0\leq |\alpha|\leq k_j}$ of $E_j$ by the {\it graded lexicographic order}: 
 \[
 a^{\alpha}e_j\leq_{grlex} a^{\gamma}e_j
 \]
 if and only if
 \[
 |\alpha|\leq |\gamma| \text{ or } (|\alpha|=|\gamma| \text{ and } \alpha\leq_{lex} \gamma) \,,
 \] 
 where $\leq_{lex}$ refers to the {\it lexicographic order}. The matrix $A_j$ associated to the operator $\Delta_{\varphi;h}$ with respect to this basis
 is upper triangular, and the entries in its main diagonal are all equal to  $d_j(h)=e_j(h)-\varphi(h)$. Obviously, this implies that if $\varphi(h)\neq e_j(h)$, then $d_j(h)\neq 0$, so that the restriction of $\Delta_{\varphi;h}$ to $E_j$ is invertible. In particular, if $\Delta_{\varphi;h}b_j=0$ for some $b_j$ in $E_j$, then $b_j=0$.
 \vskip.3cm 
 
 Let $f=b_1+\cdots+b_s$ be in $\mathcal{E}$ such that  $\Delta_{\varphi;h_j}^{s_j}f=0$ for all $j$. Then $\Delta_{\varphi;h_{j}}^{s_{j}}b_i=0$ for all $1\leq i\leq s$ and all $1\leq j\leq t$. Thus, if $b_{i_1},b_{i_2}\neq 0$ with $i_1\neq i_2$ (so that $f$ is not an exponential monomial) and we take $j\in\{1,\cdots,t\}$, then $\Delta_{\varphi;h_{j}}^{s_{j}}b_{i_1}=0$ with $b_{i_1}\neq 0$ implies that $\varphi(h_j)=e_{i_1}(h_j)$. The same argument, when applied to $b_{i_2}$, shows that $\varphi(h_j)=e_{i_2}(h_j)$. Hence $e_{i_1}(h_j)=e_{i_2}(h_j)$ for all $1\leq j\leq t$. As $\{h_1,\cdots,h_t\}$ generates $G$ and $e_{i_1},e_{i_2}$ are homomorphisms, this implies $e_{i_1}=e_{i_2}$, which is a contradiction. It follows that $f$ is an exponential monomial. Furthermore, we have also proved that if $f=b_{k_0}$, then $\varphi(h_j)=e_{k_0}(h_j)$ for all $j$.  This ends the first part of the proof. 
\vskip.3cm
 
 If we assume that $\Delta_{\varphi;y}^{n+1}f(x)=0$ for all $x,y$ in $G$, then we can add this $y$ to the system $\{h_i\}_{i=1}^t$ to get a generating set of $G$ with $t+1$ elements, and then we apply the result to infer $\varphi(y)=e_{k_0}(y)$. This proves that $\varphi=e_{k_0}$ is an exponential.  
 \end{proof}
 
 The following corollary is evident.
 
 \begin{corollary}Let $G$ be an Abelian group and assume that $\Delta_{\varphi;h}^{n+1}f=0$ admits a nonzero solution $f$. Then $\varphi$ is an exponential on $G$. 
 \end{corollary}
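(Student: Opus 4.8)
The plan is to derive this from Theorem \ref{montel_all_cases} by localizing the arbitrary Abelian group $G$ to its finitely generated subgroups, where case $(1)$ of that theorem applies verbatim. Here the hypothesis is read in the natural way: $f$ is a fixed nonzero function and $\Delta_{\varphi;h}^{n+1}f=0$ (as a function of the base point) for \emph{every} increment $h$ in $G$. Since being an exponential is a statement about pairs of points, it suffices to prove $\varphi(a+b)=\varphi(a)\varphi(b)$ for an arbitrary pair $a,b$ in $G$.

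First I would fix such $a,b$ and pick a point $p\in G$ with $f(p)\neq 0$; this exists because $f$ is nonzero. Let $H$ be the subgroup of $G$ generated by $a$, $b$ and $p$; it is finitely generated, say with generating set $h_1,\dots,h_t$. Set $g=f|_H$ and $\psi=\varphi|_H$. Then $g$ is nonzero, since $g(p)=f(p)\neq 0$. Moreover, for $x,h\in H$ the value $\Delta_{\varphi;h}^{n+1}f(x)=\sum_{j=0}^{n+1}\binom{n+1}{j}(-\varphi(h))^{n+1-j}f(x+jh)$ only involves values of $f$ on $H$, so the restricted identity $\Delta_{\psi;h}^{n+1}g=0$ holds for every $h\in H$; in particular it holds for $h=h_1,\dots,h_t$.

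Next I would apply Theorem \ref{montel_all_cases}, case $(1)$, to the finitely generated Abelian group $H$ with generators $h_1,\dots,h_t$, to the nonzero function $g$, with $n_1=\dots=n_t=n$, and with $\psi$ in the role of $\varphi$. The theorem yields that $g$ is an exponential monomial on $H$; denote by $e$ the exponential associated to it. Since we have also shown that $\Delta_{\psi;y}^{n+1}g(x)=0$ for \emph{all} $x,y\in H$, not only for the generators, the concluding clause of Theorem \ref{montel_all_cases} gives $\psi=e$. Thus $\varphi|_H=\psi$ is an exponential on $H$, and in particular $\varphi(a+b)=\psi(a+b)=\psi(a)\psi(b)=\varphi(a)\varphi(b)$. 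As $a,b$ were arbitrary, $\varphi$ is an exponential on $G$, as claimed.

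I do not anticipate a genuine obstacle: all the substantive work is already packaged in Theorem \ref{montel_all_cases}, and what remains is the routine localization bookkeeping. The only point requiring a little care is enlarging the subgroup so that it meets the set where $f$ does not vanish, which is what makes the restriction $g=f|_H$ nonzero and hence lets the theorem be invoked; checking that the modified difference operators restrict compatibly is immediate from their expansion in terms of translations. It is worth noting that one cannot shortcut the proof by applying the identity \eqref{difmod} to reduce directly to Fr\'echet's equation for $f\cdot\widecheck{\varphi}$, since that identity already presupposes that $\varphi$ is an exponential --- precisely what is being proved here.
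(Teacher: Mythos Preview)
Your proof is correct and matches the paper's intent: the paper simply declares the corollary ``evident'' after Theorem~\ref{montel_all_cases}, and your localization to a finitely generated subgroup containing a point where $f$ does not vanish is exactly the routine step that makes the last clause of that theorem applicable to an arbitrary Abelian group. The only remark one might add is that once you have $\varphi(a+b)=\varphi(a)\varphi(b)$ for all $a,b$, the fact that $\varphi|_H=e$ is a genuine exponential on some $H$ forces $\varphi(0)=1$, hence $\varphi$ never vanishes and is an exponential in the sense used in the paper; this is implicit in your argument but worth a word.
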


\section{Montel--type theorem for exponential polynomials and a characterization of local exponential polynomials }
 
 Let $G$ be an Abelian group, and let $H$ be the subgroup of $G$ generated by the elements $\{g_1,\cdots,g_t\}$ of $G$, further let  $\{n_{i,k}\}_{1\leq i\leq r, 1\leq k\leq t}$ be a finite set of natural numbers.  We say that  the complex valued functions $\{\varphi_k\}_{k=1}^r$ on $H$ form a {\it minimal set} of functions for the functional equation 
 \begin{equation}\label{Frech3330}
 \Delta_{\varphi_1;g_{i_1}}^{n_{1,i_1}+1} \Delta_{\varphi_2;g_{i_2}}^{n_{2,i_2}+1} \cdots \Delta_{\varphi_r;g_{i_r}}^{n_{r,i_r}+1}f(x)=0\,
 \end{equation}
with $1\leq i_k\leq t$, $k=1,\cdots,r$, if $f:G\to\C$ is a function satisfying equation \eqref{Frech3330}  for each $x$ in $H$, and either $r=1$ and $f|_H\neq 0$, or $r\geq 2$ and for each $k$ in $\{1,\cdots,r\}$, there exist  natural numbers $1\leq a_j\leq t$ and $j$ in $\{1,\cdots,k-1,k+1,\cdots,t\}$ such that 
 \[
 \Delta_{\varphi_1;g_{a_1}}^{n_{1,a_1}+1} \cdots  \Delta_{\varphi_{k-1};g_{a_{k-1}}}^{n_{k-1,a_{k-1}}+1} \Delta_{\varphi_{k+1};g_{a_{k+1}}}^{n_{k+1,a_{k+1}}+1}\cdots 
 \Delta_{\varphi_r,g_{a_r}}^{n_{r,a_r}+1}f(x_0) \neq 0 
\] 
for some $x_0$ in $H$.  
 
 \begin{theorem} \label{teo_Expo}
Suppose that one of the following cases holds:
 \begin{enumerate}
 \item $G$ is an Abelian group generated by the subset $\{g_1,\cdots,g_{t}\}$, and 
 $f:G\to\C$ is a nonzero function. 
 \item $G$ is a topological Abelian group in which the subset $\{g_1,\cdots,g_{t}\}$ generates a dense subgroup, and  $f:G\to\C$ is a nonzero continuous function.
 \item $G=\mathbb{R}^d$, in which the subset $\{g_1,\cdots,g_{t}\}$ generates a dense subgroup, and $f$ is a nonzero complex valued distribution.
 \end{enumerate}
 Suppose moreover that there exist natural numbers $\{n_{i,k}\}_{1\leq i\leq r, 1\leq k\leq t}$ and functions $\varphi_k:G\to\C$ such that \eqref{Frech3330} holds for $1\leq i_k\leq t$, $k=1,\cdots,r$, and for each $x$ in $G$.
 \vskip.3cm

 Then the following statements hold:
 \begin{enumerate}[(i)]
 \item $f$ is an exponential polynomial of the form
 \begin{equation*}\label{decomposition_f}
 f=\sum_{i=1}^N p_i e_i\,,
 \end{equation*}
 where $p_i$ is a polynomial, and $e_i$ is an exponential for  $i=1,\cdots, N$. 
 \item If $\{\varphi_k\}_{k=1}^r$ is a minimal set of functions for the functional equation \eqref{Frech3330}, then there exist exponential 
 functions $m_k:G\to\C$ such that $m_k(g_i)=\varphi_k(g_i)$ for all $i=1,2,\dots,t$ and $k=1,2,\dots,r$. Moreover, if the functions $\varphi_k$ also  satisfy the equation
 \[
 \Delta_{\varphi_1;y_1}^{n_1+1} \Delta_{\varphi_2;y_{2}}^{n_{2}+1} \cdots \Delta_{\varphi_r;y_{r}}^{n_{r}+1}f(x)=0
 \]
for all $x,y_1,\cdots,y_r$ in $G$, then $\varphi_k$ is an exponential function for $k=1,2,\dots,r$, further $N\leq r$ and, possibly by renumbering  the $m$'s, we have $e_k(g_i)=m_k(g_i)$ for all $1\leq i\leq t$, $1\leq k\leq N$.
 \end{enumerate} 
 \end{theorem}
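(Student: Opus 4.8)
The plan is to reduce everything to the one–exponential Montel theorem, Theorem \ref{montel_all_cases}, together with the density results Theorems \ref{Lefr}, \ref{me} and \ref{AnsKor}. The first thing I would record is that the operators $L_{k,i}:=\Delta_{\varphi_k;g_i}=\tau_{g_i}-\varphi_k(g_i)\tau_0$ pairwise commute, and that for a fixed $i$ the operators $L_{1,i},\dots,L_{r,i}$ differ from $\tau_{g_i}$ only by scalar multiples of $\tau_0$; hence the (commutative) subalgebra of endomorphisms generated by all the $L_{k,i}$ coincides with the one generated by $\tau_{g_1},\dots,\tau_{g_t}$.

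For (i) I would put $i_1=\dots=i_r=i$ in \eqref{Frech3330}. Since the factors commute, this says $P_i(\tau_{g_i})f=0$, where $P_i(X)=\prod_{k=1}^r\bigl(X-\varphi_k(g_i)\bigr)^{n_{k,i}+1}$ is a one–variable polynomial of degree $d_i=\sum_k(n_{k,i}+1)$. As $P_i(\tau_{g_i})$ commutes with every $\tau_{g_j}$ and kills $f$, it kills the whole space $M:=\span\{\tau_{g_1}^{a_1}\cdots\tau_{g_t}^{a_t}f\}$, so $M=\span\{\tau_{g_1}^{a_1}\cdots\tau_{g_t}^{a_t}f:0\le a_j<d_j\}$ is finite dimensional. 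Since $M$ is invariant under each $\tau_{g_i}$ and $g_1,\dots,g_t$ generate $G$ (respectively a dense subgroup, respectively a dense subgroup of $\R^d$), Lemma \ref{NN} (respectively Lemma \ref{nuevonuevo_lem}) makes $M$ translation invariant, whereupon Theorems \ref{Lefr}, \ref{me}, \ref{AnsKor} force $M$, hence $f$, to consist of exponential polynomials; writing $f=\sum_{i=1}^N p_ie_i$ with pairwise distinct exponentials $e_i$ and nonzero polynomials $p_i$ finishes (i).

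For (ii), given a minimal set $\{\varphi_k\}$, I would fix $k$ and use minimality to pick indices $a_j$ $(j\ne k)$ with $h_k:=\prod_{j\ne k}\Delta_{\varphi_j;g_{a_j}}^{n_{j,a_j}+1}f\ne0$. Feeding the tuple $(a_1,\dots,a_{k-1},i,a_{k+1},\dots,a_r)$ into \eqref{Frech3330} and commuting the factors gives $\Delta_{\varphi_k;g_i}^{n_{k,i}+1}h_k=0$ for every $i=1,\dots,t$, so by Theorem \ref{montel_all_cases} the function $h_k$ is an exponential monomial whose associated exponential $m_k$ satisfies $m_k(g_i)=\varphi_k(g_i)$ for all $i$; this is the first assertion of (ii). If moreover the Fréchet-type equation with free increments holds, the same substitution (keeping $y_j=g_{a_j}$ for $j\ne k$, letting $y_k$ vary) yields a nonzero function $\widetilde h_k$ of the same shape with $\Delta_{\varphi_k;y}^{n_k+1}\widetilde h_k=0$ for all $x,y$, so the final clause of Theorem \ref{montel_all_cases} gives that $\varphi_k$ is an exponential. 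Expanding $\widetilde h_k$ through $f=\sum p_ie_i$ and using that $\Delta_{\varphi_j;y}$ carries $p_ie_i$ into $(\text{polynomial})\cdot e_i$ shows that $\widetilde h_k$ is a linear combination of $e_1,\dots,e_N$; being an exponential monomial it must equal $q\cdot e_{\sigma(k)}$ with $q\ne0$, whence $\varphi_k=e_{\sigma(k)}$ for some index $\sigma(k)\in\{1,\dots,N\}$.

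It then remains to show $N\le r$, i.e. that $\sigma$ is onto. If some $e_{i_0}$ lay outside the range of $\sigma$, then $e_{i_0}\ne\varphi_j=e_{\sigma(j)}$ for every $j$, so I could choose $y_j$ with $e_{i_0}(y_j)\ne\varphi_j(y_j)$; since applying $\Delta_{\varphi_j;y_j}$ to $p_{i_0}e_{i_0}$ again produces a polynomial of the same degree times $e_{i_0}$ (its leading homogeneous part being that of $p_{i_0}$ scaled by the nonzero factor $e_{i_0}(y_j)-\varphi_j(y_j)$, translation not affecting leading parts), the product $\prod_{j=1}^r\Delta_{\varphi_j;y_j}^{n_j+1}(p_{i_0}e_{i_0})$ is a nonzero polynomial of degree $\deg p_{i_0}$ times $e_{i_0}$. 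As the operators send each $p_ie_i$ into a multiple of $e_i$, this is exactly the $e_{i_0}$-component of $\prod_j\Delta_{\varphi_j;y_j}^{n_j+1}f$, contradicting that the latter vanishes. Hence $\sigma$ is onto, $N\le r$, and renumbering the $m_k$ so that $\sigma(k)=k$ for $k\le N$ yields $e_k(g_i)=\varphi_k(g_i)=m_k(g_i)$ for $1\le i\le t$, $1\le k\le N$. The step I expect to be the real work is this last paragraph — the leading-term/degree computation isolating the $e_{i_0}$-component, which rests on the linear independence of the system $\{a^{\alpha}e\}$ (Lemma 4.8 of \cite{MR1113488}, already used above) — together with the mild bookkeeping needed to guarantee that the function $\widetilde h_k$ extracted from the minimality hypothesis is genuinely nonzero while reconciling the exponents of the two functional equations; everything else is a routine reduction to the single-exponential case of Theorem \ref{montel_all_cases}.
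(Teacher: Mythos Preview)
Your proof is correct, and for part~(ii) it follows essentially the paper's route (apply the minimality hypothesis to strip all but one modified difference, invoke Theorem~\ref{montel_all_cases} on the resulting nonzero function, then run an invertibility/component argument to force $N\le r$). The exponent-reconciliation issue you flag for $\widetilde h_k$ is real but is treated with the same looseness in the paper itself.

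Part~(i) is where you genuinely diverge. The paper proceeds by induction on $r$: it sets $h_i=\Delta_{\varphi_1;g_i}^{n_{1,i}+1}f$, uses the inductive hypothesis to make each $h_i$ an exponential polynomial (so its translation orbit $W_i$ is finite dimensional), and then observes that $V=\operatorname{span}\{f\}+W_1+\cdots+W_t$ is finite dimensional and $\Delta_{\varphi_1;g_i}^{n_{1,i}+1}$-invariant, whence Theorem~\ref{coro2} applies. You instead specialize $i_1=\cdots=i_r=i$ to obtain a one-variable polynomial relation $P_i(\tau_{g_i})f=0$ with $P_i(X)=\prod_k(X-\varphi_k(g_i))^{n_{k,i}+1}$ and read off a finite-dimensional $\tau_{g_i}$-invariant space directly. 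This is shorter and bypasses both the induction and Theorem~\ref{coro2} (and hence Lemma~\ref{dos1}); what the paper's route buys is that it keeps everything phrased in terms of modified differences, matching the rest of the article, and reuses the machinery already built. One small point worth making explicit in your write-up: the passage from $\tau_{g_i}(M)\subseteq M$ to full translation invariance uses that $M$ is finite dimensional (so $\tau_{g_i}|_M$ is bijective and $\tau_{-g_i}(M)=M$), since Lemma~\ref{NN} as stated does not by itself supply the negative powers.

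In the $N\le r$ step your leading-term argument is equivalent to the paper's, but the paper runs it at the generators $g_{a_i}$ using the upper-triangular matrix computation from the proof of Theorem~\ref{montel_all_cases} (invertibility of $\Delta_{\varphi_i;g_{a_i}}$ on $E_{k_0}$ when $\varphi_i(g_{a_i})\ne e_{k_0}(g_{a_i})$), whereas you use the free-increment equation and pick arbitrary $y_j$. The paper's version therefore yields the interpolation conclusion $e_{k_0}(g_j)=\varphi_i(g_j)$ directly from \eqref{Frech3330}, without needing that the $\varphi_j$ are exponentials; your version needs the ``moreover'' hypothesis, which is harmless since that is where the statement places this conclusion anyway.
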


 \begin{proof} We prove $(1)$ as the other cases  are direct consequences of this statement.  
 First we prove $(i)$. By induction, we show that if $f$ satisfies \eqref{Frech3330} with some functions $\varphi_k:G\to\C$ ($k=1,2,\dots,r$), then $f$ is an exponential polynomial. This claim has already been proved for $r=1$, so that we assume $r\geq 2$ and we let $h_i=\Delta_{\varphi_1;g_i}^{n_{1,i}+1}f$. Then 
 \begin{equation}
 \Delta_{\varphi_2;g_{i_2}}^{n_{2,i_2}+1} \cdots \Delta_{\varphi_r;g_{i_r}}^{n_{r,i_r}+1}h_i(x)= \Delta_{\varphi_1;g_{i}}^{n_{1,i}+1} \Delta_{\varphi_2;g_{i_2}}^{n_{2,i_2}+1} \cdots \Delta_{\varphi_r;g_{i_r}}^{n_{r,i_r}+1}f(x)=0
 \end{equation}
 for $1\leq i,i_k\leq t$, $k=2,\cdots,r$, and for all $x$ in $G$. Thus, the induction hypothesis implies that $h_i$ is an exponential polynomial for $1\leq i\leq t$. 
In particular, $W_{i}=\tau(h_{i})$ is finite dimensional, for $1\leq i\leq t$. 
\vskip.3cm

Let $V=\span\{f\}+W_1+W_2+\cdots+W_t$.  Then $V$ is a finite dimensional space. Moreover,
 \[
 \Delta_{\varphi_1;g_i}^{n_{1,i}+1}(V)\subseteq V, \ \ i=1,2\cdots,t
 \]
since $h_i=\Delta_{\varphi_1;g_i}^{n_{1,i}+1}f$ is in $W_i \subseteq V$, $i=1,\cdots,t$, and, on the other hand,  for each $j$, the space $W_j$ is translation invariant.  Indeed, Lemma \ref{NN} implies that $W_j$ is  $\Delta_{m_1;g_i}$-invariant for each $i$, hence  it is also $\Delta_{\varphi_1;g_i}^{n_{1,i}+1}$-invariant for each $i$.
\vskip.3cm
 
 It follows from Theorem \ref{coro2}  that all elements of $V$ are exponential polynomials. In particular, $V\subseteq \bigoplus_{k=1}^NE_k$, where each $E_k$ is a finite dimensional  translation invariant vector space,  whose elements are exponential monomials with associated exponential function $e_k:G\to\mathbb{C}$, and $i\neq j$ implies $e_i\neq e_j$.  Thus $f$ is an exponential polynomial, and it can be decomposed as a sum
 \begin{equation*}
 f(x)=p_1(x)e_1(x)+\cdots+p_N(x)e_N(x)
 \end{equation*}
 where $p_i\,e_i$ is a nonzero exponential monomial in $E_i$ ($i=1,\cdots, N$). 
\vskip.3cm

Now we prove $(ii)$.  The minimality assumption on $\{\varphi_k\}_{k=1}^r$ implies that  for each $k$ in $\{1,\cdots,r\}$ there exist  natural numbers $a_j$ in the set $\{1,2,\dots,t\}$ and $j$ in the set $\{1,\cdots,k-1,k+1,\cdots,t\}$ such that 
 \[
 \Delta_{\varphi_1;g_{a_1}}^{n_{1,a_1}+1} \cdots  \Delta_{\varphi_{k-1};g_{a_{k-1}}}^{n_{k-1,a_{k-1}}+1} \Delta_{\varphi_{k+1};g_{a_{k+1}}}^{n_{k+1,a_{k+1}}+1}\cdots 
 \Delta_{\varphi_r;g_{a_r}}^{n_{r,a_r}+1}f(x) \neq 0 
\] 
for some $x_0$ in $G$.  Thus, we can  apply Theorem \ref{montel_all_cases} to 
$$
\psi_k(x)= \Delta_{\varphi_1;g_{a_1}}^{n_{1,a_1}+1} \cdots  \Delta_{\varphi_{k-1};g_{a_{k-1}}}^{n_{k-1,a_{k-1}}+1} \Delta_{\varphi_{k+1};g_{a_{k+1}}}^{n_{k+1,a_{k+1}}+1}\cdots 
 \Delta_{\varphi_r;g_{a_r}}^{n_{r,a_r}+1}f(x)\,,
$$
 since $\Delta_{\varphi_k;g_i}^{n_{k,i}+1}\psi_k(x)=0$ for all $x$ in $G$ and $i=1,\cdots,t$, further $\psi_k\neq 0$. Thus $\psi_k$ is an exponential monomial with associated exponential function $m_k$, and $m_k(g_i)=\varphi_k(g_i)$ for $1\leq i\leq t$.  Furthermore, if 
\begin{equation*} 
 \Delta_{\varphi_1;g_{a_1}}^{n_{1,a_1}+1} \cdots  \Delta_{\varphi_{k-1};g_{a_{k-1}}}^{n_{k-1,a_{k-1}}+1} \Delta_{\varphi_k;y_k}^{n_k+1}  \Delta_{\varphi_{k+1};g_{a_{k+1}}}^{n_{k+1,a_{k+1}}+1}\cdots 
 \Delta_{\varphi_r;g_{a_r}}^{n_{r,a_r}+1}f(x) =\Delta_{\varphi_k;y_k}^{n_k+1}  \psi_k(x)=0 
\end{equation*} 
for all $x,y_k$ in $G$, then $\varphi_k=m_k$ is an exponential function.  
\vskip.3cm

Now we prove that under the given conditions $N\leq r$, and each exponential $e_i$ interpolates one of the functions $\varphi_1,\varphi_2,\cdots,\varphi_r$ at the set of nodes $\{g_1,g_2,\cdots,g_t\}$.
\vskip.3cm
 
The computations in the proof of Theorem \ref{montel_all_cases} show that if $\varphi_i(g_j)\neq e_{k_0}(g_j)$, then the operator $\Delta_{\varphi_i;g_j} :E_{k_0}\to E_{k_0}$ is invertible. 
Assume that $e_{k_0}$ is such that for each $1\leq i\leq r$ the function $e_{k_0}$ does not interpolate $\varphi_i$ at the nodes $\{g_1,g_2,\cdots,g_t\}$. 
This means that for each $i$ in $\{1,\cdots,r\}$ there exists $a_i$ in $\{1,\cdots,t\}$ such that $e_{k_0}(g_{a_i})\neq \varphi_i(g_{a_i})$, so that $\Delta_{\varphi_i;g_{a_i}}^{n_{i,a_i}+1}:E_{k_0}\to E_{k_0}$ is invertible.   Thus, if the term $p_{k_0} e_{k_0}$ is nonzero in the decomposition of $f$ as a sum of exponential monomials, then
 \begin{eqnarray*}
 0 &=& \Delta_{\varphi_1;g_{a_1}}^{n_{1,a_1}+1} \Delta_{\varphi_2;g_{a_2}}^{n_{2,a_2}+1} \cdots \Delta_{\varphi_r;g_{a_r}}^{n_{r,a_r}+1}(f) \\
 &=& \sum_{k=1}^N\Delta_{m_1;g_{a_1}}^{n_{1,a_1}+1} \Delta_{\varphi_2;g_{a_2}}^{n_{2,a_2}+1} \cdots \Delta_{\varphi_r;g_{a_r}}^{n_{r,a_r}+1}(p_ke_k).
 \end{eqnarray*}
 Here the $k$-th term belongs to $E_k$ and the sum is a direct sum, hence it vanishes if and only if all the terms are zero. However  
 \[
 \Delta_{\varphi_1;g_{a_1}}^{n_{1,a_1}+1} \Delta_{\varphi_2;g_{a_2}}^{n_{2,a_2}+1} \cdots \Delta_{\varphi_r;g_{a_r}}^{n_{r,a_r}+1}(p_{k_0}e_{k_0})\neq 0
 \]
 since $p_{k_0}e_{k_0}\neq 0$, and the operator $\Delta_{\varphi_1;g_{a_1}}^{n_{1,a_1}+1} \Delta_{\varphi_2;g_{a_2}}^{n_{2,a_2}+1} \cdots \Delta_{\varphi_r;g_{a_r}}^{n_{r,a_r}+1}:E_{k_0}\to E_{k_0}$ is invertible.  This is a contradiction,  consequently we conclude that there exists $1\leq i=i(k_0)\leq r$ such that $e_{k_0}(g_j)=\varphi_i(g_j)$ for all $1\leq j\leq t$. As this holds for $1\leq k_0\leq N$, the proof is complete. 
  \end{proof}
 
  \begin{theorem} \label{cor_Expo} 
 Let $G$ be an Abelian group. The function $f:G\to\C$ is a local exponential  polynomial if and only if for each positive integer $t$, and for each elements $g_1,g_2,\dots,g_t$ in $G$ there are exponential functions $m_k:H\to\C$ defined on the subgroup $H$ generated by the $g_i$'s, and there are natural numbers $n_{i,k}$ for $1\leq i\leq r$, $1\leq k\leq t$ such that
 \begin{equation}\label{Frech33}
 \Delta_{m_1;g_{i_1}}^{n_{1,i_1}+1} \Delta_{m_2;g_{i_2}}^{n_{2,i_2}+1} \cdots \Delta_{m_r;g_{i_r}}^{n_{r,i_r}+1}f(x)=0\,
 \end{equation}
 holds for $1\leq i_k\leq t$, $k=1,\cdots,r$, and for all $x$ in $H$.
 \end{theorem}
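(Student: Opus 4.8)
The plan is to prove both implications by restricting $f$ to an arbitrary finitely generated subgroup $H$ of $G$ and then invoking Theorem~\ref{teo_Expo} together with the identity~\eqref{difmod}; this is the exact analogue, for the polynomial case, of the argument used for Theorem~\ref{TLEM}.

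\emph{Necessity.} Suppose $f$ is a local exponential polynomial, fix $t$ and elements $g_1,\dots,g_t$ in $G$, and let $H$ be the subgroup they generate. By definition $f|_H$ is an exponential polynomial on the finitely generated group $H$, so it can be written as $f|_H=\sum_{k=1}^{r} p_k e_k$, where the $e_k$ are exponentials on $H$ and the $p_k$ are polynomials on $H$, of some finite degrees $d_1,\dots,d_r$. I would then take $m_k=e_k$ and $n_{i,k}=d_i$ for all $i\in\{1,\dots,r\}$ and $k\in\{1,\dots,t\}$. The only computation required is that $\Delta_{e_k;y}^{d_k+1}(p_k e_k)=0$ for every $y\in H$, which follows at once from~\eqref{difmod} since $(p_k e_k)\cdot\widecheck{e_k}=p_k$ satisfies Fr\'echet's equation $\Delta_y^{d_k+1}p_k=0$. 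Because the modified difference operators commute (a direct consequence of $\Delta_{\varphi;y}=\tau_y-\varphi(y)\tau_0$ and the commutativity of the translations), for any choice of indices $i_1,\dots,i_r$ the factor $\Delta_{m_k;g_{i_k}}^{n_{k,i_k}+1}$ may be placed so as to act first, and it already annihilates $p_k e_k$; summing over $k$ yields~\eqref{Frech33}. The case $f|_H=0$ is trivial.

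\emph{Sufficiency.} Suppose the stated condition holds, and let $H$ be an arbitrary finitely generated subgroup of $G$; the goal is to show $f|_H$ is an exponential polynomial. Choose a finite generating set $g_1,\dots,g_t$ of $H$. The hypothesis provides exponentials $m_k$ on $H$ and natural numbers $n_{i,k}$ for which~\eqref{Frech33} holds for all index choices and all $x\in H$. If $f|_H=0$ there is nothing to prove; otherwise I would apply Theorem~\ref{teo_Expo}, case~(1), with $H$ in place of $G$, the generating set $\{g_1,\dots,g_t\}$, the function $f|_H$, and $\varphi_k=m_k$: part~(i) of that theorem states precisely that $f|_H$ is an exponential polynomial. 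Since $H$ was arbitrary, $f$ is a local exponential polynomial.

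I do not expect a genuine obstacle: both directions are bookkeeping on top of Theorem~\ref{teo_Expo} and~\eqref{difmod}. The two points that deserve a line of care are the commutativity of the operators $\Delta_{\varphi;y}$ for possibly different modifying functions $\varphi$ (used in the necessity argument to reorder the product), and the fact that the degrees $d_i$ are bona fide natural numbers, which holds because each $p_i$ is an ordinary polynomial of additive functions on a finitely generated group.
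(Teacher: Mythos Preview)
Your proof is correct and follows essentially the same approach as the paper's own proof. The paper dispatches necessity in one line (``equation \eqref{Frech33} trivially holds for appropriate values of $n_{k,j}$''), whereas you spell out why---via \eqref{difmod} and the commutativity of the operators $\Delta_{\varphi;y}$---which is a welcome bit of extra detail; for sufficiency both you and the paper simply invoke Theorem~\ref{teo_Expo}.
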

 
\begin{proof}
If $f$ is a local exponential polynomial and $H$ is a finitely generated subgroup of $G$ with generators $g_1,\cdots,g_t$, then there exist polynomials $p_k:H\to\C$ and exponentials $m_k:H\to\mathbb{C}$ for $k=1,\cdots, r$ such that $f(x)=\sum_{k=1}^rp_k(x)m_k(x)$ for each $x$ in $H$. Then equation \eqref{Frech33} trivially holds for appropriate values of $n_{k,j}$. This proves the necessity of the condition.
\vskip.3cm

The sufficiency is a direct consequence of Theorem \ref{teo_Expo}.
\end{proof}
 
If $f:G\to\mathbb{C}$ is a local exponential polynomial, then the number of exponentials appearing in the decomposition of the restriction of $f$ to $H$ may depend on $H$. Indeed,  we can give the following example: let $G$ be the set of finitely supported complex sequences $x=(x_i)_{i\in\N}$,  and let $ f(x)=\sum_{i\in \N}(2^{i})^{x_i}x_i^i$.

\end{document}